\documentclass{article}

\usepackage[english]{babel}

\usepackage[letterpaper,left=30mm,right=30mm,top=22mm,bottom=22mm,marginparwidth=1.5cm]{geometry} %

\usepackage{bbm}
\usepackage{amsmath, amssymb, mathtools}
\usepackage{graphicx}
\usepackage{amsthm}
\usepackage[colorlinks=true, allcolors=blue]{hyperref}
 \usepackage{authblk}

\usepackage[utf8]{inputenc}

\newtheorem{theorem}{Theorem}[section]
\newtheorem{lemma}[theorem]{Lemma}
\newtheorem{prop}[theorem]{Proposition}
\newtheorem{defn}[theorem]{Definition}
\newtheorem{remark}[theorem]{Remark}

\newcommand{\norm}[1]{\left\lVert #1 \right \rVert}
\newcommand{\f}{\boldsymbol}


\title{On reduced inertial PDE models for Cucker-Smale flocking dynamics}

\author[a,c,$\star$]{Sebastian Zimper}
\author[b,$\star$]{Federico Cornalba}
\author[a]{Nata\v sa Djurdjevac Conrad}
\author[c]{Ana Djurdjevac}

\affil[a]{Zuse Institute Berlin, Germany}
\affil[b]{Department of Mathematical Sciences, University of Bath, UK}
\affil[c]{Institut f\"ur Mathematik und Informatik, Freie Universit\"at Berlin, Berlin, Germany}
\affil[$\star$] {S. Zimper and F. Cornalba have contributed equally to this work.}

\begin{document}

\maketitle
\begin{abstract}
  In particle systems, flocking refers to the phenomenon where particles’ individual velocities eventually align. The Cucker-Smale model is a well-known mathematical framework that describes this behavior. Many continuous descriptions of the Cucker-Smale model use PDEs with both particle position and velocity as independent variables, thus providing a full description of the particles mean-field limit (MFL) dynamics. In this paper, we introduce a novel reduced inertial PDE model consisting of two equations that depend solely on particle position. 
In contrast to other reduced models, ours is not derived from the MFL, but directly includes the model reduction at the level of the empirical densities, thus allowing for a straightforward connection to the underlying particle dynamics.
We present a thorough analytical investigation of our reduced model, showing that: firstly, our reduced PDE satisfies a natural and interpretable continuous definition of flocking; secondly, in specific cases, we can fully quantify the discrepancy between PDE solution and particle system. 
Our theoretical results are supported by numerical simulations.   \\
  
{\bfseries Key words:} Cucker-Smale model, flocking dynamics, collective behavior, interacting particle systems, PDE model, reduced inertial PDE, mean-field limit, stochastic PDE.\\

{\bfseries AMS (MOS) Subject Classification:} 35R60, 82C22, 92D99, 65N06.

\end{abstract}

\section{Introduction}
\label{sec:Intro}


Collective motion is used to describe a wide variety of phenomena observed in nature, in which large numbers of interacting particles self-organise from an unordered to an ordered state. The particles can represent, for example, bacteria, fish, birds and other vertebrates found in large groups, all of which exhibit collective motion due to the interactions between individuals (see, for example, the review \cite{Couzin2003}). One of the most common cases of collective motion is flocking, the alignment of the velocities of all particles. Mathematical models to describe flocking are usually expressed as inertial interacting particle systems, where the interaction between particles leads to a velocity alignment (see \cite{Vicsek2012} and the references therein). One of the prototypical models of this sort is the Cucker-Smale (CS) model \cite{cucker2007emergent,Cucker2007b}, describing a deterministic system where the influence of one particle on another is weighed by some interaction potential (also referred to as the communication rate). There exist various analytical results on the conditions necessary for flocking and on the rate of flocking, considering for example the initial positions and velocities of particles, as well as system parameters \cite{cucker2007emergent, Cucker2007b,HaTadmor2008, HaLiu2009, Ahn2012}.

Simulating the dynamics of such systems when the number of particles is very large becomes computationally expensive. To overcome this challenge, the mean-field limit (MFL) is used to give a macroscopic description of the system capturing the overall distribution of the particles' positions and velocities. For the CS model, rigorous derivations and analysis of flocking for the mean-field limit are presented in \cite{HaTadmor2008,HaLiu2009}. The simulation of the MFL can however still be computationally expensive, as it is necessary to solve a high dimensional partial differential equation (PDE). 
This high computational cost has motivated the derivation of several reduced models \cite{CarrilloEtAl2010A,CarrilloEtAl2010B,Tadmor2014, Choi2017} which -- in essence -- trade off some accuracy in exchange for a PDE living on a smaller space, while still exhibiting flocking. More specifically, these reduced models are usually obtained following two basic principles: 
i) key quantities in these reduced models -- such as local mass, momentum and energy densities -- are derived from the MFL itself; 
ii) the hydrodynamic equations of such leading quantities are not closed due to the model reduction, but can be closed in relevant microscopic regimes (the most notable of which is the so-called \emph{mono-kinetic ansatz} \cite{CarrilloEtAl2010B}), thus giving a self-contained, computationally efficient  continuous model which still captures flocking.

In this work, we propose and analyze an alternative reduced PDE for the CS flocking dynamics which, crucially, is \emph{not} derived from the MFL. Recall that in the MFL derivation the starting point is the empirical measure that involves the Dirac delta of position and velocity. On the other hand, our model is built by directly defining  the empirical particle density $\rho$ and momentum density $\f{j}$ which are functions of the space variable only \eqref{rho_j}, and that feature the velocity field in a multiplicative manner, similar to a local Eulerian velocity definition. In the reduced model, flocking velocity will appear as a (random) parameter. 
Crucially, our reduced model relies solely on a closing approximation for the particle system, where the approximation is tied to the velocity alignment. 
The main motivation for having a reduced model which bypasses the MFL entirely, is 
to directly model the velocity contributions at the level of the empirical densities. This has numerous advantages, which include: a) a better control of the closing approximation resulting from velocity alignment; b) a simple and intuitive algebraic condition identifying flocking at the PDE level; c) a convenient analytical setting for framing the quantitative analysis of fluctuations in the velocity field. Although we do not consider these fluctuations in this paper, we plan to study them in subsequent works, and see this work as laying the necessary modelling foundations for such an effort. On a similar note, it is worth mentioning that the derivation of our reduced model is largely compatible with the analytical framework of reduced inertial models of fluctuating hydrodynamics 
\cite{cornalba2021well,cornalba2020weakly,Cornalba2019}, thus enabling exchange of methodologies and providing additional foundations for future works on fluctuations. 

Our proposed reduced model expands the scope of existing approaches to model  flocking. 
In particular, we have derived our reduced PDE model to accurately capture the behaviour of the CS model in the flocking regime, the state where the particles' velocities are sufficiently aligned.
Understanding the dynamics, particularly in the flocking regime is crucial, as it can reveal challenges associated with introducing external influences that effectively change and steer the flock. Our main motivation comes from potential applications to systems where influencing flocking patterns is of particular relevance, such as the biological examples of birds and fish schools discussed above.  Similar challenges arise in other systems, such as in online social platforms, where developing influencer and media strategies to influence overall opinion distribution can be difficult when strong alignment exists \cite{Helfmann2023}. In this context, we view our reduced model as a basis for developing computationally efficient tools to study and potentially steer such collective motion in various systems.

We now informally explain the contents of our main results (Theorem \ref{thm:all_PDE_flocking_results} and Theorem \ref{thm_error_PDE_particles_H_minus_2}). 
First, we define an associated, intuitive notion of flocking for our reduced PDE model (this notion is partially reminiscent of the one presented in \cite{Karper2013}, even though its derivation does not rely on the MFL).
We then prove analytically that for a specific class of interaction potentials, the PDE model exhibits flocking (Theorem \ref{thm:all_PDE_flocking_results}). Furthermore, in dimension one, we quantify the discrepancy between our reduced model and the dynamics of the particle system (Theorem \ref{thm_error_PDE_particles_H_minus_2}). 
We present Fig.~\ref{fig:Schematic} as a schematic illustration of our approach to derive the reduced PDE model from the particle model.    
To further improve the accuracy of our reduced PDE model,  
particularly  during the pre-flocking phase, we introduce a modification of the PDE which greatly decreases the discrepancy between the PDE and the underlying particle model without increasing the  computational effort. We prove the well-posedness of this modification as well as that it preserves PDE flocking, and demonstrate its positive impact using extensive numerical results. Analytical results quantifying the improvement in accuracy of this modification are, however, still lacking. We also demonstrate that simulating the PDE is significantly faster than the CS model for larger numbers of particles, justifying the use of the reduced model. To compare our reduced inertial model and the hydrodynamic equations, we use numerical simulations and show that the two models are not identical, and that the two reduced models have a similar level of accuracy in reproducing the results of the particle system. Lastly, we observe that if noise is included in the particle dynamics (such as in \cite{10.1214/18-AAP1400}), a reduced stochastic PDE can be formally derived which we observe to also accurately approximate the particle system in the flocking regime. Further analytical considerations of the associated analysis of fluctuations will be studied in the future.  

The rest of this paper is organised as follows. The Cucker-Smale model along with the derivation of our reduced PDE model are discussed in Section \ref{sec:ModelReduction}. Our two main results on PDE flocking, and discrepancy between PDE and CS model are stated in Section \ref{sec:InformalMain}. Section \ref{sec:PDEflock} is devoted to proving all necessary analytical results substantiating the first main result on PDE flocking, while Section \ref{sec:ReductionAccuracy} is devoted to proving the second main result on the accuracy of the reduced model in approximating the particle system. Various numerical results are presented in Section \ref{sec:Numerical}, 
including the experiments on the accuracy of the reduced model, a comparison with the hydrodynamic model, the computational effort of the particle system and reduced model, and the inclusion of noise in the dynamics. Finally, in Section \ref{sec:Conclusion} we present our conclusions and highlight some open questions. In the appendix we provide some of the technical details of the proofs from Sections \ref{sec:PDEflock} and \ref{sec:ReductionAccuracy}. 

\begin{figure}[h]
\centering
    \includegraphics[width=0.75\linewidth]{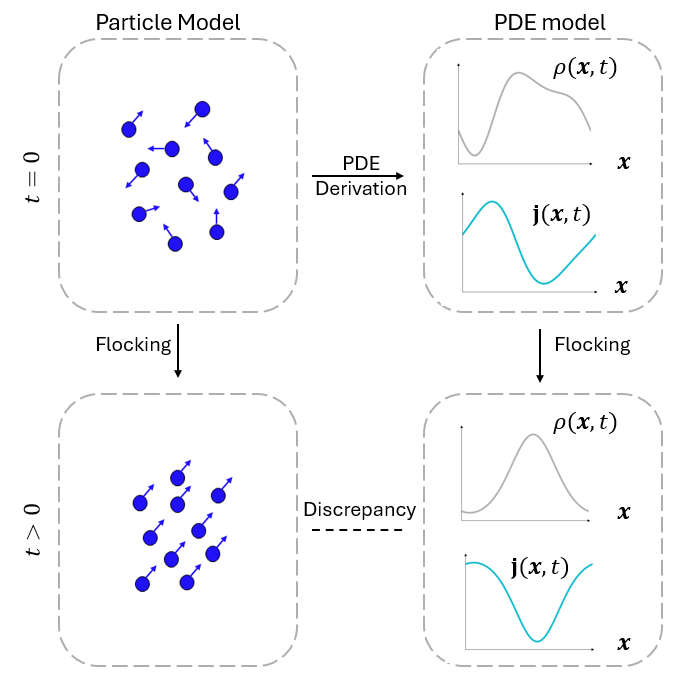}
\caption{Schematic overview of our approach for deriving the reduced PDE model from a particle model. Blue circles represent particles characterised by their position (center of the circle) and velocity (indicated by the arrows). In the flocking regime, particles align their velocities. PDE model is given by the empirical density $\rho$ and momentum density $\f{j}$.}  
\label{fig:Schematic}
\end{figure}

\section{Model reduction PDE vs CS}
\label{sec:ModelReduction}

\subsection{Cucker-Smale model}
\label{subsec:CSmodel}

We begin by introducing the well-established CS model \cite{cucker2007emergent,Cucker2007b} of flocking dynamics, that describes how particles interact with each other in order to align their movements. We consider the basic case associated with the purely deterministic second-order dynamics for $N$ particles moving in a $d-$ dimensional space\footnote{In the original work this space was $\mathbb{R}^d$. Here, we consider the torus $\mathbb{T}^d$ as it simplifies our analysis.}, where $N\geq 2$ and $d \geq 1$. The motion of each particle $i$ is represented by its position in a $d-$dimensional torus $\f{x}_i\in \mathbb{T}^d$ and its velocity $\f{v}_i \in \mathbb{R}^d$. The evolution of the Cucker-Smale model is governed by the system
\begin{align}
    \dot{\f{x}}_i (t) & = \f{v}_i (t), \label{particles_det_x} \\
    \dot{\f{v}}_i (t) & = N^{-1}\sum_{j=1}^{N}{a(\f{x}_i (t)-\f{x}_j (t))(\f{v}_j (t)-\f{v}_i (t))},\label{particles_det_v}
\end{align}
for all $i=1,\dots,N$, and with (possibly random) initial conditions 
$\{(\f{x}_i (0), \f{v}_i (0))\}_{i=1}^{N}$. 
The non-negative function $a$ denotes a pairwise interaction potential that determines the strength of the influence that particles have on each other. In the standard Cucker-Smale model $a$ is a function of a distance between particles and is thus symmetric, such that particles $i$ and $j$ have the same influence on the alignment of each other. The symmetry property of $a$ is the key ingredient to derive the flocking behavior \cite{HaTadmor2008}. Extensions to asymmetric models for studying flocking, e.g. with the presence of "leaders" have also been considered \cite{Motsch2011}. Throughout this work, we use one of the earliest and most standard choices for $a$ \cite{cucker2007emergent}, namely
\begin{equation}
    \label{eq:NonConstComm}
    a(\f{x}_i(t) - \f{x}_j(t)) = \frac{\lambda}{(1 + \|\f{x}_i(t) - \f{x}_j(t)\|^2 )^r },
\end{equation}
for some fixed parameters $r, \lambda \geq 0$, which determine the decay and coupling strength of the interaction potential, respectively. 
The CS model is said to exhibit flocking if all the particles' velocities $\f{v}_i(t)$ converge to the initial mean velocity 
\begin{align}\label{particle_flocking_velocity}
    \overline{\f{v}} := N^{-1}\sum_{i=1}^N \f{v}_i(0),
\end{align}
while the distance between the particles remains 
(uniformly) bounded in time. It is easy to see that the mean velocity of the system $\overline{\f{v}}(t) = N^{-1} \sum_{i=1}^N \f{v}_i(t)$ is conserved under the dynamics \eqref{particles_det_x}--\eqref{particles_det_v} as $a(\cdot)$ is an even function \cite{10.1214/18-AAP1400}. We recall the formal definition of flocking \cite{cucker2007emergent,Cucker2007b}. 
\begin{defn}[Particle flocking in $L^{p},L^{q}$]\label{def:Particleflocking}
An interacting particle system $\{(\f{x}_i,\f{v}_i)\}_{i=1}^N$ is said to flock in $L^{p},L^{q}$ if the following two conditions are satisfied:
\begin{itemize}
    \item \textbf{Velocity alignment:}  \begin{align}\label{particle_vel_alignment}
    \lim_{t\rightarrow+\infty} \mathbb{E} \left[ \|\f{v}_i (t) - \overline{\f{v}}  \|^p \right]= 0, \quad 1\leq i \leq N.
    \end{align}
    \item \textbf{Group formation\footnote{Since we consider the space $\mathbb{T}^d$ this condition will always be fulfilled.}:} 
\begin{align}\label{particle_pos_alignment}
    \sup_{0\leq t<\infty} \mathbb{E}[\|\f{x}_i (t) - \f{x}_j (t) \|^q] < \infty, \quad 1\leq i, j \leq N.
    \end{align}
\end{itemize}
\end{defn}
A variety of results exist on the system parameters for which the Cucker-Smale model flocks unconditionally, i.e., for all initial distributions of particles, as well as conditionally. In particular, for the interaction potential \eqref{eq:NonConstComm}, unconditional flocking occurs if $r \leq 1/2$, see \cite{cucker2007emergent,Cucker2007b, HaTadmor2008, HaLiu2009,CarrilloEtAl2010A}. We consider only unconditional flocking in this work and thus set $r=1/2$ in numerical simulations.

For our purposes, it will be convenient to stick to a declination of flocking which is closely tied to that of Definition \ref{def:Particleflocking}. Specifically, following the lines of \cite{HaLiu2009}, assuming the initial support of the velocities to be compactly supported in a sphere of radius $R_v$ (i.e., $supp(v_0)\subset B_{R_v}(0)$), there is particle flocking for the Cucker-Smale system \eqref{particles_det_x}--\eqref{particles_det_v} in the sense that there
exists an $x_M$ such that 
\begin{align}\label{particle_flocking_bound}
\left(\sum_{i=1}^{N}{|\f{v}_i - \overline{\f{v}}|^2}\right)^{1/2} \leq \left(\sum_{i=1}^{N}{|\f{v}_i(0) - \overline{\f{v}}|^2}\right)^{1/2} e^{-a(x_M)t} \leq N^{1/2}R_v e^{-a(x_M)t}.
\end{align}

\begin{remark}\label{shifting_vs}
Since shifting $\f{v}_i$ by a constant only impacts the dynamics of the system by a translation of the particle position's, we always consider $\overline{\f{v}}$ with $\overline{v}_i\neq 0, i=1,\dots,d$, in the upcoming sections. This will simplify some technical arguments.    
\end{remark}

To study the Cucker-Smale model when the number of particles is large, it is useful to consider the MFL of the system, describing the evolution of the distribution of particles $f(\f{x},\f{v},t)$ at position $\f{x}$ and with velocity $\f{v}$ at time $t$. The mean-field equation is given by
\begin{align}
    \label{eq:MFLsystem}
    \partial_t f + \f{v} \cdot	\nabla_{\f{x}} f + \nabla_{\f{v}} \cdot Q(f,f) = 0 ,
\end{align}
where 
\begin{align*}
    Q(f,f)(\f{x},\f{v},t) := \int a( \f{x} - \f{y}) (\f{v}' - \f{v}) f(\f{x},\f{v},t) f(\f{y},\f{v}',t) d\f{v}' d \f{y} .
\end{align*}
This kinetic description for the Cucker-Smale model was derived in \cite[Section $3$]{HaTadmor2008}, where the global existence of smooth solutions was proved \cite[Theorem $3.1$]{HaTadmor2008}. Furthermore, by analysing the energy fluctuations around the mean velocity it was shown that for $r < 1/4$ the kinetic model also exhibits flocking \cite[Theorem $4.1$]{HaTadmor2008}. In \cite[Theorem 6.2]{HaLiu2009} the global existence of measure valued solutions of 
\eqref{eq:MFLsystem} was proved. 

A reduced PDE, or hydrodynamic model, is formally obtained in \cite{HaTadmor2008} by taking moments of the solution to \eqref{eq:MFLsystem}. The macroscopic quantities of interest are: the mass $\rho := \int f d\f{v} $; the momentum $\rho \f{u} := \int \f{v} f  d\f{v}$, where $\f{u}(\f{x},t)$ is referred to as the mean velocity field; and the energy $\rho E := \int |\f{v}|^2 f d\f{v}$. The resulting system \cite[Equation 5.3]{HaTadmor2008} is however not closed with respect to these quantities. In \cite{CarrilloEtAl2010B}, a closed system is obtained by assuming that fluctuations around the mean velocity field are negligible, and considering the mono-kinetic ansatz, i.e., $f(\f{x},\f{v},t) = \rho(\f{x},t)  \delta(\f{v} - \f{u}(\f{x},t) )$. The resulting, closed, hydrodynamic model is 
\begin{align}
    \partial_t \rho & = - \nabla \cdot (\rho \f{u}) , \label{eq:Hydrodynamic1} \\
    \partial_t (\rho u_i) & = \rho \cdot \left( a * ( \rho u_i) \right) - (\rho u_i) \cdot \left( a *  \rho \right)  - \nabla \cdot (\rho \f{u} u_i ) , \label{eq:Hydrodynamic2}
\end{align}
where $u_i$ is the $i$-th component of $\f{u}$.

The existence of global, classical solutions of \eqref{eq:Hydrodynamic1}-\eqref{eq:Hydrodynamic2} on the periodic domain was proved in \cite[Theorem 3.1]{HaKang2014} under suitable assumptions on the interaction potential and initial data. Additionally, it is shown that \eqref{eq:Hydrodynamic1}-\eqref{eq:Hydrodynamic2} exhibits flocking, using a Lyapunov functional approach measuring the total velocity fluctuation around the mean velocity \cite[Lemma 2.2]{HaKang2014}. This result relies on the interaction potential having a positive lower bound due to the compactness of the domain, which we will also use to prove flocking for our reduced PDE model in Section \ref{sec:PDEflock}. In \cite{HaKang2015} this study is extended to a moving domain. The hydrodynamic limit has also been derived and studied for various extensions of the Cucker-Smale model, for example: when the particles are interacting with a viscous incompressible fluid \cite{HaKang2014}; for non-symmetric interaction potentials \cite{Motsch2011}; and when there is noise present in the particle dynamics \cite{Karper2015}. To the best of our knowledge the accuracy with which the reduced, hydrodynamic model reproduces the dynamics of the particle system has however not been investigated.


\subsection{Derivation of reduced PDE model for flocking}
\label{subsec:PDEderiv}

In order to derive a continuous reduced model for \eqref{particles_det_x}--\eqref{particles_det_v}, we characterise the dynamics of the empirical density of particles $\rho$ and the empirical momentum density of particles $\f{j}$ given by
\begin{align}
    \rho(\f{x},t) & := N^{-1}\sum_{i=1}^{N}{\delta(\f{x}-\f{x}_i(t))} \label{rho_j} ,\qquad 
    \f{j}(\f{x},t) := N^{-1}\sum_{i=1}^{N}{\f{v}_i(t)\delta(\f{x}-\f{x}_i(t))},
\end{align}
 where $\delta$ denotes the Dirac distribution. 
 As already mentioned in the introduction, and in contrast to the MFL approach described in ther previous section, the densities \eqref{rho_j} are structurally associated with a reduced model, as the atomic measures are evaluated in time and position only, and the velocity enters just as the multiplicative factor.
 
 A formal application of the chain rule on \eqref{rho_j} gives us
\begin{align*}
    \partial_t \rho(\f{x},t) & = -N^{-1}\sum_{i=1}^{N} \sum_{z=1}^d \partial_{x_z} \delta(\f{x}-\f{x}_i(t))\f{v}_{i,z} (t) = - \nabla \cdot \f{j} ,
\end{align*}
and for the $m$-th component of $\f{j}$ ,
 \begin{align}\label{Evolution_J}
     \partial_t j_m(x,t) & = N^{-1}\sum_{i=1}^{N}{\partial_t {v}_{i,m}(t)\delta(\f{x}-\f{x}_i(t))} + N^{-1}\sum_{i=1}^{N}{v_{i,m}(t)\partial_t{\delta}(\f{x}-\f{x}_i(t))}  \nonumber\\
     & = N^{-1}\sum_{i=1}^{N} \left(N^{-1}\sum_{j=1}^{N}{a(\f{x}_i(t)-\f{x}_j(t))(v_{j,m}(t)-v_{i,m}(t))}\right) \delta(\f{x}-\f{x}_i(t))  \nonumber\\
     & \quad\quad - N^{-1}\sum_{i=1}^{N}{v_{i,m}(t) \sum_{z=1}^d \partial_{x_z} \delta(\f{x}-\f{x}_i(t)) v_{i,z}(t) } =: T_1 + T_2.
 \end{align}

Splitting the terms involving $\f{v}_i$ and $\f{v}_j$, we carry on in \eqref{Evolution_J} and obtain
\begin{align}\label{derive_convolutions_atomic}
T_1 & = N^{-1}\sum_{i=1}^{N}{\delta(\f{x}-\f{x}_i(t)) N^{-1}\sum_{j=1}^{N}{a(\f{x}_i(t)-\f{x}_j(t))v_{j,m}(t)}} \nonumber\\
& \quad\quad - N^{-1}\sum_{i=1}^{N}{v_{i,m}(t) \delta(\f{x}-\f{x}_i(t)) N^{-1}\sum_{j=1}^{N}{a(\f{x}_i(t)-\f{x}_j(t))}} \nonumber\\
& = N^{-1}\sum_{i=1}^{N}{\delta(\f{x}-\f{x}_i(t)) N^{-1}\sum_{j=1}^{N}{a(\f{x}-\f{x}_j(t))v_{j,m}(t)}}\nonumber \\
& \quad\quad - N^{-1}\sum_{i=1}^{N}{v_{i,m}(t) \delta(\f{x}-\f{x}_i(t)) N^{-1}\sum_{j=1}^{N}{a(\f{x}-\f{x}_j(t))}},
\end{align}
where in the second equality we have replaced $a(\f{x}_i(t)-\f{x}_j(t))$ with $a(\f{x}-\f{x}_j(t))$, since this is formally justified by the Dirac delta function $\delta(\f{x}-\f{x}_i(t))$ providing a non-zero contribution only when $\f{x}=\f{x}_i(t)$. This last argument is reminiscent of the derivation given in \cite{Dean1996}. 

Using the definition of the densities $\rho$ and $\f{j}$ in \eqref{rho_j}, as well as re-writing the sums over the index $j$ as convolutions in space in \eqref{derive_convolutions_atomic}, we obtain
\begin{align}
    \partial_t j_m(\f{x},t) & = \rho (a\ast j_m) - j_m (a \ast \rho) + T_2.
\end{align}
The term $T_2$ needs closing with respect to the densities $\rho, \f{j}$ (for more extensive discussions about this, we refer to \cite{cornalba2021well}). What is important to note here is that, assuming the flocking has occurred (i.e.,  $\f{v}_j\equiv \overline{\f{v}}$ for all $j= 1,\dots,N$), then the term $T_2$ can be re-written by: i) pulling out the common flocking velocity $\overline{\f{v}}$, and ii) identifying differential operators in $\rho,\f{j}$. 
Such an approximation is not unique, and gives us some modelling freedom, as we now detail.

\subsubsection{Reduced PDE model in dimension $d=1$}
\label{subsub:1d}
Here, we approximate $T_2 \approx - \overline{v}^2 \nabla \rho$, thus obtaining the reduced PDE model
\begin{align}
    \partial_t \rho(x,t) & = - \nabla \cdot j \label{reduced_CS_rho} ,\\
    \partial_t j(x,t) & = \left\{\rho (a\ast j)  - j (a \ast \rho) - \overline{v}^2 \nabla \rho\right\} \label{reduced_CS_j}.
\end{align}

\subsubsection{Enhanced reduced PDE model in dimension $d=1$: space-dependent weights}
\label{subsub:1dweighted}
In order to improve the approximation $T_2 \approx - \overline{v}^2 \nabla \rho$ outside of the flocking regime, we allow for a weight $w=w(x,t)$ and consider the ``weighted'' 1d PDE model
\begin{align}
    \partial_t \rho(x,t) & = - \nabla \cdot j \label{reduced_CS_rho_w} ,\\
    \partial_t j(x,t) & = \left\{\rho (a\ast j)  - j (a \ast \rho) - \overline{v}^2 \nabla (w\rho)\right\} \label{reduced_CS_j_w},
\end{align}
as thoroughly discussed in Subsection \ref{subsec:Weights}. 

\subsubsection{Reduced PDE model in dimension $d>1$}
\label{subsub:d_gr_1}
We approximate $T_2$ as
$
    [T_2]_m \approx -N^{-1} \sum_{i=1}^N \overline{v}_m \sum_{z=1}^d \partial_{x_z} \delta(\f{x}-\f{x}_i(t)) v_{i,z} = - \overline{v}_m \nabla \cdot \f{j}.
$
and consequently end up with the reduced PDE model
\begin{align}
    \partial_t \rho(\f{x},t) & = - \nabla \cdot \f{j}  \label{reduced_CS_rho_d},\\
    \partial_t \f{j}(\f{x},t) & = \left\{\rho (a\ast \f{j})  - \f{j} (a \ast \rho) - \overline{\f{v}} \nabla \cdot \f{j}\right\}\label{reduced_CS_j_d} .
\end{align}

\begin{remark}
    While the reduced model \eqref{reduced_CS_rho_d}--\eqref{reduced_CS_j_d} could -- in principle -- also be used in dimension $d=1$, we adopt \eqref{reduced_CS_rho}--\eqref{reduced_CS_j} in $d=1$ instead, as this form allows for stronger analytical results. From a purely modelling perspective, it is noted that both systems \eqref{reduced_CS_rho_d}--\eqref{reduced_CS_j_d} and \eqref{reduced_CS_rho}--\eqref{reduced_CS_j} are consistent with the particle system once the system is in the flocking regime.
\end{remark}

\subsubsection{SPDE model for simple particle noise}
\label{subsubsec:SPDEderivation}
It is often the case that the deterministic dynamics of the Cucker-Smale model are enriched by the addition of noise terms (see \cite{10.1214/18-AAP1400} and the references therein). The inclusion of noise in the particle system results in the reduced model being a stochastic PDE. 
For example, let us consider adding a simple multiplicative noise  to the system with identical Brownian motion $B(t)$, as in \cite[System (1.8)]{10.1214/18-AAP1400}, for $d=1$
\begin{align}
    dx_i & = v_i(t) dt \label{eq:stoc_det_x} , \\
    dv_i & = N^{-1}\sum_{j=1}^{N}{a(x_i(t)-x_j(t))(v_j(t)-v_i(t))} dt + \sigma (v_i(t) - \overline{v}) d B(t)\label{eq:stoc_det_v} , 
\end{align}
where $\sigma >0 $. Note that this extension still preserves the mean velocity of the system. The addition of the noise term similarly as before, gives a \emph{reduced stochastic Cucker-Small} PDE model 

\begin{align}
    d \rho(x,t) & = - \nabla \cdot j dt, \label{reduced_SCS_rho}\\
    d j(x,t) & = \left\{\rho (a\ast j) dt - j (a \ast \rho) - \overline{v}^2\nabla \rho\right\} dt + \sigma (j-\overline{v}\rho)dB(t) \label{eq:reduced_SCS_j}.
\end{align}

The inclusion of the noise in the PDE extends similarly to $d>1$. In Subsection \ref{subsec:SPDE} we present numerical results demonstrating that the reduced stochastic PDE reproduces the statistics of the stochastic Cucker-Smale model. 
The analysis of fluctuations via reduced SPDE models (including, but not limited to, \eqref{reduced_SCS_rho}--\eqref{eq:reduced_SCS_j}) is deferred to future works. 

\subsection{Consistent choice of initial condition for reduced PDE models}
\label{subsec:choice_intial_cond}

Since, by construction, the masses of the empirical densities \eqref{rho_j} are one and $\overline{\f{v}}$ \eqref{particle_flocking_velocity}, respectively, for all times, it is natural to require the same for our PDE models. This is easily done by starting the PDE models from initial data $\rho_0(x)$ and $j_0(x)$ with masses one and $\overline{\f{v}}$ (respectively), and noticing that, since the interaction function $a$ is symmetric, such masses are preserved over time. In other words, we get
    \begin{align}\label{masses}
    \int_{\mathbb{T}^d}{\rho(\f{x},t)d\f{x}}\equiv 1,\qquad \int_{\mathbb{T}^d}{\f{j}(\f{x},t)d\f{x}} = \overline{\f{v}} :=\int_{\mathbb{T}^d}{\f{j}_0(\f{x})d\f{x}}, \qquad \forall t\geq 0.
    \end{align}
\begin{remark}
In practice, it will sometimes be convenient to work with smoothed versions of the particles' empirical densities. Namely, we will consider the smoothed densities
\begin{align}\label{smoothed_densities}
        \rho_\epsilon(\f{x},t) := N^{-1}\sum_{i=1}^{N}{\delta_\epsilon(\f{x}-\f{x}_i(t))},\qquad j_\epsilon(\f{x},t) := N^{-1}\sum_{i=1}^{N}{\f{v}_i(t)\delta_\epsilon(\f{x}-\f{x}_i(t))} ,
    \end{align}
    where $\delta_\epsilon\colon \mathbb{T}^d\rightarrow \mathbb{R}$ is the von Mises kernel\footnote{The von Mises kernel $\delta_\epsilon\colon\mathbb{T}\rightarrow \mathbb{R}$ with variance $\epsilon^2$ is given by $
        \delta_\epsilon(x) := Z_\epsilon^{-1} \exp\{- \sin^2(x/2)/(\epsilon^2 /2)\}$, where $Z_\epsilon := \int_{\mathbb{T}} \exp\{- \sin^2(x/2)/(\epsilon^2 /2)\} dx$.} with variance $\epsilon^2$. 
\end{remark}

\subsection{Notion of PDE flocking}

We specify what we mean by 'flocking' at the PDE level. In the following, we will simply write spaces without specifying the domain and  will denote the $L^2(\mathbb{T}^d)$-norm by $\| \cdot \|$. 

\begin{defn}[PDE flocking]\label{def:PDEflocking}
    For either one of the PDE models in Subsections \ref{subsub:1d}--\ref{subsub:1dweighted}--\ref{subsub:d_gr_1}, we say that PDE flocking occurs if
    \begin{align}\label{continuous_alignment}
    \lim_{t\rightarrow\infty}{\|\f{j}(t)-\overline{\f{v}}\rho(t)\|} = 0.    
    \end{align}
\end{defn}

Definition \ref{def:PDEflocking} essentially says: 
\emph{flocking at the continuous level is characterised by the momentum density $\f{j}$ aligning with a multiple of the density $\rho$, the multiple being the particle flocking velocity $\overline{\f{v}}$}. While we are interested in such an alignment measured in the $L^2$-norm, the same alignment measured in a weaker norm can be justified easily and intuitively at the level of the empirical densities \eqref{rho_j} when particle flocking occurs. 
For example, for $d=1$, assuming particle flocking in the form of $\lim_{t \rightarrow \infty}\mathbb{E}[|v_i(t)-\overline{v}|^2]=0$ implies $\lim_{t \rightarrow \infty}\mathbb{E}{|\langle \overline{v}\rho(t) - j(t), \varphi \rangle|} = 0$ for all $\varphi \in L^{\infty}(\mathbb{T})$. 
This is easily seen by using the definitions \eqref{rho_j} and noticing that as  $t\rightarrow \infty$, we have
    \begin{align*}
    & \mathbb{E}[|\langle \overline{v}\rho(t) - j(t), \varphi \rangle|]  \leq N^{-1}\sum_{i=1}^{N}{\mathbb{E}[|(v_i(t) - \overline{v})\varphi(x_i)|]} \leq N^{-1}\|\varphi\|_{L^\infty}\sum_{i=1}^{N}\mathbb{E}[|v_i(t)-\overline{v}|^2]^{\frac{1}{2}} \rightarrow 0.
    \end{align*}
 \begin{remark}
     Definition \ref{def:PDEflocking} can be seen an the inertial analogue of flocking conditions for continuous models derived by reducing the full mean-field-limit dynamics, see for instance \cite[Section 1.1.3]{Karper2013}.
 \end{remark}

To illustrate the flocking in both the particle and the PDE model, we perform numerical simulations of a two-dimensional particle model and compare it to the PDE result. The interaction function is given by \eqref{eq:NonConstComm}, with $\lambda =50$ and $r = 1/2$ and the initial positions and velocities of the particles are randomly generated from a uniform distribution on the interval $\f{x}_i \in [-20,20]^2$ and $\f{v}_i \in [-10,10]^2$, respectively. The initial data for the PDE is chosen to be the smoothed empirical density of the particles (see \eqref{smoothed_densities}), with $\epsilon = 1$. 
In the top row of Fig.~\ref{fig:J_vector}, we show snapshots from a single realisation of the two-dimensional particle model for $N=10^3$ particles at three different time points $t=0$, $t=0.8$ and $t=2$. In the middle and the bottom row of Fig.~\ref{fig:J_vector}, we plot the corresponding solutions of the reduced PDE. For the particle model we can see that while the velocities are initially randomly distributed, as the system flocks they start aligning with the mean velocity. For the PDE, the profile of $\rho$ changes initially, but at later times, once the system has flocked sufficiently, it is only translated in the direction of the mean velocity. Flocking in the PDE sense is evident when looking at the vector field of $\f{j}$, as the vector field starts to align in the direction of $\overline{\f{v}}$ (the red arrow with base at the origin), with the magnitude of the vectors at each position in space scaled by the density $\rho$ at that point.

\begin{figure}[h]
    \includegraphics[width=0.99\linewidth]{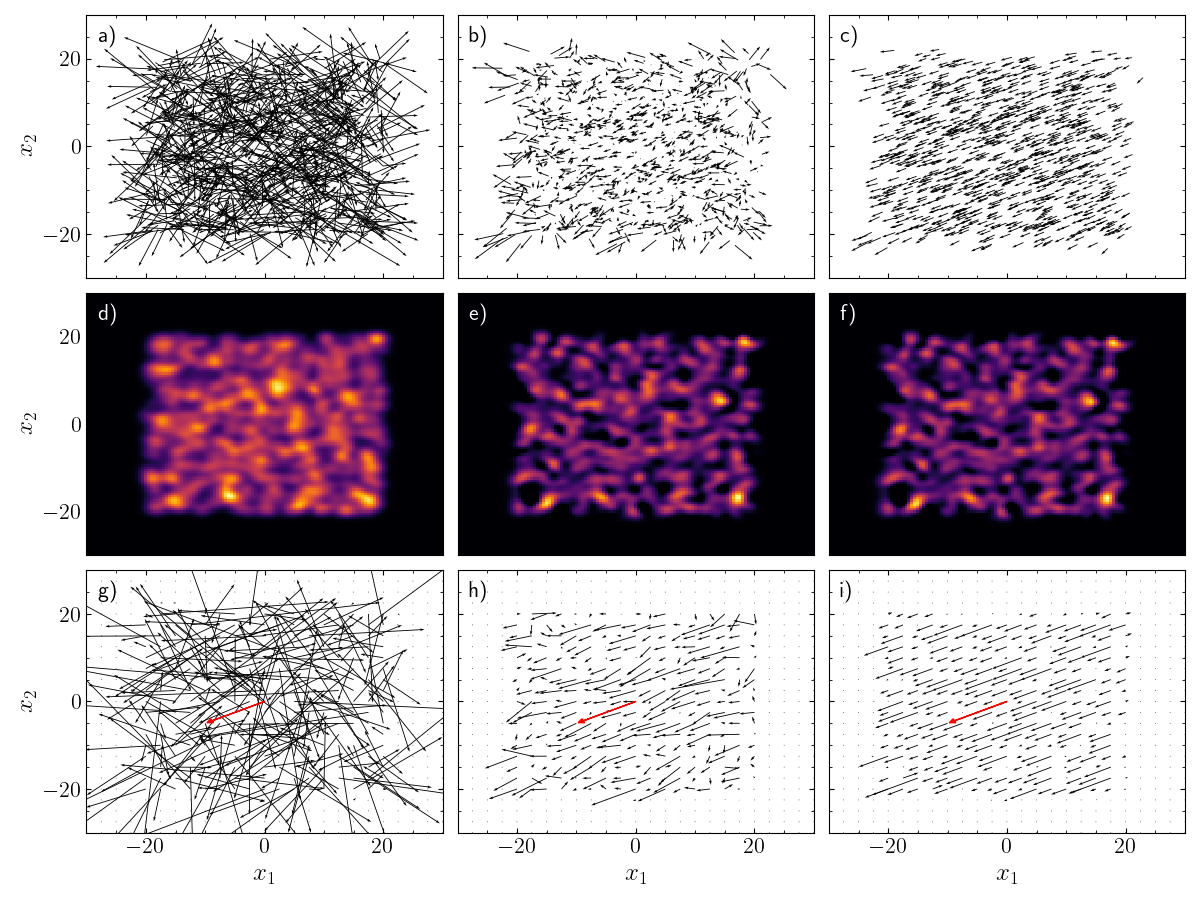}
\caption{Snapshots of the results obtained for the Cucker-Smale model \eqref{particles_det_x}-\eqref{particles_det_v} and reduced PDE model \eqref{reduced_CS_rho_d}-\eqref{reduced_CS_j_d} in two-dimensions at times $t=0$ (first column), $t=0.4$ (second column), and $t=2$ (third column). Top-row: particle system for $N=1000$, with the arrows and their base corresponding to the particles' velocities and positions respectively. Middle-row: the empirical density $\rho$ from the reduced model, brighter colours indicating a higher density. Bottom-row: the empirical momentum density represented as a vector field. The red arrow centred at the origin corresponds to $\overline{\f{v}}$.}  
\label{fig:J_vector}
\end{figure}

\section{Statements of the main results}
\label{sec:InformalMain}

Our first main result is composed of several results about PDE flocking, and is the following.

\begin{theorem}[PDE flocking: Informal statement combining Proposition \ref{local_result}, Proposition \ref{local_result_w}, Lemma \ref{well_posedness_reg}]
\label{thm:all_PDE_flocking_results}
Consider the Cucker-Smale dynamics given in Subsection \ref{subsec:CSmodel} with the class of interaction potentials given by \eqref{potential_split}. Then we have the following results concerning the associated PDE flocking (as per Definition \ref{def:PDEflocking}). 
\begin{description}
    \item[a.] (PDE flocking in $d=1$, cfr. Proposition \ref{local_result}). Subject to suitable smallness assumptions on the initial data, the 1d PDE model \eqref{reduced_CS_rho}--\eqref{reduced_CS_j} exhibits PDE flocking.
    \item[b.] (PDE flocking in $d=1$ with space-dependent weight $w$, cfr. Proposition \ref{local_result_w}). Subject to suitable smallness assumptions on the initial data and the weight $w$, the PDE model \eqref{reduced_CS_rho_w}--\eqref{reduced_CS_j_w} exhibits PDE flocking.
    \item[c.] (PDE flocking in $d>1$, cfr. Lemma \ref{well_posedness_reg}). Subject to suitable smallness assumptions on the initial data and to the diffusive-type regularisation \eqref{reg_reduced_CS_rho_d}--\eqref{reg_reduced_CS_j_d}, the PDE model \eqref{reduced_CS_rho_d}--\eqref{reduced_CS_j_d} exhibits PDE flocking.
\end{description}
\end{theorem}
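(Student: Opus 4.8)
The plan is to reduce all three statements to one robust energy estimate for the \emph{flocking residual}
\begin{align*}
\f{r} := \f{j} - \overline{\f{v}}\rho,
\end{align*}
whose $L^2$-norm is exactly the quantity appearing in Definition \ref{def:PDEflocking}. A structural fact I would record at the outset is that mass conservation \eqref{masses} forces $\int_{\mathbb{T}^d}\f{r}\,d\f{x} = \overline{\f{v}} - \overline{\f{v}} = 0$ for all times: the residual has zero mean. The decisive first step is to derive a closed evolution equation for $\f{r}$. Inserting $\f{j} = \f{r} + \overline{\f{v}}\rho$ into each momentum equation, the nonlinear interaction collapses neatly, $\rho(a\ast\f{j}) - \f{j}(a\ast\rho) = \rho(a\ast\f{r}) - \f{r}(a\ast\rho)$, since the two $\overline{\f{v}}\rho(a\ast\rho)$ contributions cancel. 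For the transport terms I would use $\partial_t\rho = -\nabla\cdot\f{j}$: in the model \eqref{reduced_CS_rho_d}--\eqref{reduced_CS_j_d} the term $-\overline{\f{v}}\nabla\cdot\f{j}$ is cancelled exactly by $-\overline{\f{v}}\partial_t\rho$, leaving $\partial_t\f{r} = \rho(a\ast\f{r}) - \f{r}(a\ast\rho)$ plus the regularisation; in the 1d model \eqref{reduced_CS_rho}--\eqref{reduced_CS_j} the same bookkeeping leaves a single, harmless transport term, $\partial_t r = \rho(a\ast r) - r(a\ast\rho) + \overline{v}\,\partial_x r$. Writing the potential as $a = \underline{a} + b$ with $\underline{a} := \min a > 0$ and $b \geq 0$ (the splitting \eqref{potential_split}), the zero-mean property gives the useful simplification $a\ast\f{r} = b\ast\f{r}$.

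The second step is the energy estimate for $\tfrac12\norm{\f{r}}^2$. The transport term $\overline{v}\langle r,\partial_x r\rangle = \tfrac{\overline{v}}{2}\int_{\mathbb{T}}\partial_x(r^2)\,dx$ vanishes on the torus and plays no role. The dissipative term is $-\langle\f{r},\f{r}(a\ast\rho)\rangle = -\int|\f{r}|^2\,(a\ast\rho)$; using $a\ast\rho = \underline{a} + b\ast\rho \geq \underline{a}$ (from $b\geq 0$, $\rho\geq 0$ and unit mass) this is bounded above by $-\underline{a}\norm{\f{r}}^2$, exactly the mechanism behind the positive-lower-bound hypothesis borrowed from \cite{HaKang2014}. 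The remaining term is controlled by Young's convolution inequality together with the zero-mean simplification, $\langle\f{r},\rho(a\ast\f{r})\rangle = \langle\f{r},\rho(b\ast\f{r})\rangle \leq \norm{\rho}_{L^\infty}\norm{b}_{L^1}\norm{\f{r}}^2$. Combining these,
\begin{align*}
\frac{d}{dt}\norm{\f{r}}^2 \leq -2\bigl(\underline{a} - \norm{b}_{L^1}\norm{\rho}_{L^\infty}\bigr)\norm{\f{r}}^2,
\end{align*}
so as long as $\norm{\rho}_{L^\infty}$ stays below the threshold $\underline{a}/\norm{b}_{L^1}$, Grönwall's inequality yields the exponential decay $\norm{\f{r}(t)}\to 0$ demanded by PDE flocking.

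The hard part, and the source of the smallness hypotheses, is that this decay is only \emph{conditional} on $\norm{\rho}_{L^\infty}$ staying small, while $\rho$ is itself driven by $\f{j} = \f{r} + \overline{\f{v}}\rho$ through the continuity equation. I would close the loop with a continuation (bootstrap) argument: on the maximal interval on which $\norm{\rho}_{L^\infty}$ remains in a fixed neighbourhood of its initial value, the inequality above forces $\f{r}$ to decay; feeding this back into the $\rho$-equation and using smallness of $\norm{\f{r}_0}$ shows that $\rho$ cannot leave the neighbourhood, so the interval is in fact all of $[0,\infty)$. Conceptually this is a perturbation around the genuinely flocked state $\f{r}\equiv 0$, in which $\rho$ merely translates rigidly at velocity $\overline{\f{v}}$ and $\norm{\rho}_{L^\infty}$ is conserved; making this rigorous, and in particular propagating enough regularity and nonnegativity of $\rho$ to give meaning to $\norm{\rho}_{L^\infty}$ and to the dissipation bound, is where the real work lies.

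The remaining two cases are perturbations of this core argument. For the weighted model \eqref{reduced_CS_rho_w}--\eqref{reduced_CS_j_w} the residual equation picks up the extra forcing $\overline{v}^2\partial_x\bigl((1-w)\rho\bigr)$; in the energy estimate this contributes a term I would absorb into the dissipation using smallness of $\norm{1-w}$ (whence the hypothesis on $w$), recovering the same differential inequality with a slightly reduced rate. For the $d>1$ model the diffusive regularisation \eqref{reg_reduced_CS_rho_d}--\eqref{reg_reduced_CS_j_d} adds a term of the form $\kappa\Delta\f{r}$ to the residual equation, which only helps: it yields an extra nonnegative dissipation $-\kappa\norm{\nabla\f{r}}^2$ and supplies the parabolic smoothing needed both for well-posedness and for controlling $\norm{\rho}_{L^\infty}$ in higher dimension, so the scheme above applies once well-posedness (the content of Lemma \ref{well_posedness_reg}) is secured.
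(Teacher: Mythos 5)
Your residual equation and the cancellation $\rho(a\ast\f{j}) - \f{j}(a\ast\rho) = \rho(a\ast\f{r}) - \f{r}(a\ast\rho)$ are exactly the structure the paper exploits, and your overall architecture (conditional decay of $\|\f{r}\|$ plus a continuation argument) mirrors the paper's stopping-time scheme. However, the functional framework you chose opens two gaps that your sketch defers but cannot close as set up. First, your dissipation bound $a\ast\rho \geq \underline{a}$ requires $\rho \geq 0$ pointwise; the system \eqref{reduced_CS_rho}--\eqref{reduced_CS_j} has no maximum principle ($\f{j}$ is an independent unknown, not slaved to $\rho$), so nonnegativity of $\rho$ is not propagated by the PDE, and the bound can fail. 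Second, and more seriously, your conditional quantity is $\|\rho\|_{L^\infty}$, but the continuity equation gives $\partial_t\rho = -\nabla\cdot\f{r} - \overline{\f{v}}\cdot\nabla\rho$, i.e.\ $\rho$ along characteristics changes by $-\nabla\cdot\f{r}$; keeping $\|\rho\|_{L^\infty}$ near its initial value would require time-integrability of a derivative norm such as $\|\nabla\cdot\f{r}\|_{L^\infty}$, which your energy estimate (decay of $\|\f{r}\|_{L^2}$ only) does not control, and the one-dimensional system is hyperbolic, so no smoothing is available to upgrade it. The bootstrap therefore does not close.

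The paper's proof is engineered precisely to avoid both issues by staying entirely in $L^2$: it writes $a = C_a + \theta g$ with $g\in L^2$ and bounds the perturbation pointwise via Cauchy--Schwarz, $\|g\ast(j-\overline{v}\rho)\|_{\infty} \leq \|g\|\,\|j-\overline{v}\rho\|$, so the conditional quantity becomes $\|\rho\|_{L^2}$ (no sign condition on $\rho$, no sup norm); and it controls $\|\rho\|_{L^2}$ through the energy $\eta = \overline{v}^2\|\rho\|^2 + \|j\|^2$, whose time derivative reduces to $2\langle\rho(a\ast j) - j(a\ast\rho), j\rangle$ because all transport terms cancel under integration by parts, so no derivative norms ever appear; a nonlinear Gr\"onwall inequality combined with the conditional flocking decay then closes the loop (Appendix \ref{Appendix:local_result}, points II and III). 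A final inaccuracy in your part c.: the regularisation \eqref{reg_reduced_CS_rho_d}--\eqref{reg_reduced_CS_j_d} is not a uniform $\kappa\Delta$; it is activated only on cells where $\|(\rho,\f{j})\|_{\nabla,c}$ exceeds a threshold, so it supplies neither parabolic smoothing nor $L^\infty$ control. The paper uses it solely to dominate the bad gradient terms in the $L^2$ energy estimate when gradients are large, with the Poincar\'e inequality handling the complementary regime (Lemma \ref{well_posedness_reg} and Appendix \ref{app:multi_d_flocking}).
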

Our second main result quantifies the error  between particle system and PDE (in dimension $d=1$, for the unweighted version \eqref{reduced_CS_rho}--\eqref{reduced_CS_j}). To this end, we compare the solution $(\rho,j)$ of the PDE model \eqref{reduced_CS_rho}--\eqref{reduced_CS_j} to the smoothed versions of the empirical densities \eqref{smoothed_densities} evaluated at time $t$. We will denote the dual space of $H_0^2$ by $H^{-2}$. In order to evaluate the error between particle system and PDE, we will be using the space $\mathbf{H}^{-2} := H^{-2} \times  H^{-2}$, equipped with the norm $    \norm{(u,v)}_{\mathbf{H}^{-2}} := \overline{v}^2 \norm{u}_{H^{-2}}^2 + \norm{v}_{H^{-2}}^2.$ Our second main result reads as follows.

\begin{theorem}[Pathwise error between $\epsilon$-smoothed CS particle system and PDE system, $H^{-2}$ system]\label{thm_error_PDE_particles_H_minus_2}
    Let $d=1$ and $\epsilon>0$ be fixed. 
    Let $X_{PDE}$, $X_{CS}$ be defined as 
\begin{align*}
    X_{PDE} & = (\rho , j) \quad \text{with }\rho,j\text{ solving \eqref{reduced_CS_rho}--\eqref{reduced_CS_j}}\\
    X_{CS} & = (\rho_\epsilon , j_\epsilon) \quad \text{with }\rho_\epsilon, j_\epsilon\text{ as in \eqref{smoothed_densities}}.
\end{align*}
where $\rho_{\epsilon}$ satisfies $\|\rho_\epsilon(\cdot,0)\| \leq C_a/(4\theta \|g\|)$, and with the particles' initial velocities $\{v_i(0)\}_{i=1}^{N}$ all falling in $B_{R_v}(0)$ (see \eqref{particle_flocking_bound}).   
Assume the interaction kernel $a$ has bounded first derivative, and that the Fourier modes $\hat{a}$ satisfy 
\begin{align}
    | \hat{a}^{(r)} (\xi) | & \lesssim (1+ |\xi|^2)^{-1},  
    \label{bound_fourier_a}  \quad\,\, r=0,1,2, \,\,\forall \xi \in \mathbb{R}.
\end{align}

Then, we have the PDE/particles bound
\begin{align}\label{main_error_estimates}
   & \norm{X_{PDE}(t) - X_{CS}(t)}_{\mathbf{H}^{-2}}  \nonumber\\
   & \lesssim \norm{X_{PDE}(0) - X_{CS}(0)}_{\mathbf{H}^{-2}}C(a, \|j_0 - \overline{v}\rho_0\|) \nonumber\\
    & \quad + C(a, \|j_0 - \overline{v}\rho_0\|)\int_{0}^{t}{R_v e^{-a(x_M)s}(1+\sqrt{\epsilon}) + \left\{ \|j_0 - \overline{v} \rho_0\| e^{-C_as} + R_v e^{-a(x_M)s} \right\} ds},
\end{align}
where $R_v,x_M$ have been introduced in \eqref{particle_flocking_bound}.
\end{theorem}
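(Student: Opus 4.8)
The plan is to derive the exact evolution equations satisfied by the smoothed empirical densities $(\rho_\epsilon, j_\epsilon)$ from \eqref{smoothed_densities}, to recognise them as the reduced PDE system \eqref{reduced_CS_rho}--\eqref{reduced_CS_j} perturbed by explicit source terms, and then to control the error $(e_\rho, e_j) := X_{PDE} - X_{CS}$ by a Gronwall argument in the weighted space $\mathbf{H}^{-2}$. Applying the chain rule to \eqref{smoothed_densities} gives the continuity equation $\partial_t \rho_\epsilon = -\nabla\cdot j_\epsilon$ \emph{exactly}, so no error enters the first equation. For $j_\epsilon$, using the particle dynamics \eqref{particles_det_v}, the same computation that produced \eqref{reduced_CS_j} now leaves two residual terms: (i) a \emph{localization error} $E_1$ arising from replacing $a(x_i - x_j)$ by $a(x - x_j)$, which is exact only against a genuine Dirac mass and incurs an $O(\epsilon)$ cost against $\delta_\epsilon$ since $a$ has bounded first derivative; and (ii) a \emph{closure error} $E_2 = T_2^\epsilon + \overline{v}^2\nabla\rho_\epsilon = -N^{-1}\sum_i (v_i^2 - \overline{v}^2)\,\nabla\delta_\epsilon(x - x_i)$, whose amplitude is governed by $v_i^2 - \overline{v}^2 = (v_i - \overline{v})(v_i + \overline{v})$ and hence by the particle flocking decay $|v_i - \overline{v}| \lesssim R_v e^{-a(x_M)t}$ from \eqref{particle_flocking_bound}.

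Subtracting the two systems and writing the bilinear interaction difference in linearised form yields a closed evolution equation for $(e_\rho, e_j)$. Two structural observations drive the estimate. First, the specific weighting $\overline{v}^2\norm{\cdot}_{H^{-2}}^2 + \norm{\cdot}_{H^{-2}}^2$ is chosen precisely so that the leading transport pair $\partial_t e_\rho = -\partial_x e_j$ and $\partial_t e_j = -\overline{v}^2\partial_x e_\rho - \cdots$ is \emph{skew-adjoint} in $\mathbf{H}^{-2}$: integrating by parts in the $H^{-2}$ pairing, $\overline{v}^2\langle e_\rho, \partial_x e_j\rangle_{-2} + \langle e_j, \overline{v}^2\partial_x e_\rho\rangle_{-2} = 0$, so these dominant terms contribute nothing to $\tfrac{d}{dt}\norm{(e_\rho,e_j)}_{\mathbf{H}^{-2}}^2$. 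Second, the interaction term collapses onto the misalignment: using $j = \overline{v}\rho + (j - \overline{v}\rho)$ one checks that $\rho(a\ast j) - j(a\ast\rho) = \rho\,(a\ast(j-\overline{v}\rho)) - (j-\overline{v}\rho)(a\ast\rho)$, which vanishes in the fully flocked state $j = \overline{v}\rho$. Consequently, when linearising the interaction difference, the resulting terms are either multiplied by $e_j - \overline{v}e_\rho$ (absorbed into the Gronwall term), or by the flocking quantities $j - \overline{v}\rho$ and $j_\epsilon - \overline{v}\rho_\epsilon$, which decay like $\norm{j_0 - \overline{v}\rho_0}e^{-C_a t}$ (by the PDE flocking of Proposition \ref{local_result}) and like $R_v e^{-a(x_M)t}$ (by particle flocking), respectively. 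This is the origin of the two bracketed source rates in \eqref{main_error_estimates}.

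It then remains to bound each contribution in $H^{-2}$. The choice of the \emph{doubly} negative space is essential here: the source terms carry a gradient of a near-Dirac, and $\norm{\nabla\delta_\epsilon}_{H^{-2}} = \norm{\delta_\epsilon}_{H^{-1}}$ stays controlled (with the $\sqrt{\epsilon}$-type dependence that appears in \eqref{main_error_estimates}), whereas in $H^{-1}$ the relevant kernel norm would blow up as $\epsilon\to 0$. For the bilinear terms, the Fourier decay \eqref{bound_fourier_a} guarantees that $a\ast g$ is controlled in $H^2$ by $\norm{g}_{H^{-2}}$, so that multiplication by such a smooth convolution maps $H^{-2}\to H^{-2}$ boundedly; together with a priori bounds on $\rho, j$ (from PDE well-posedness under the smallness of the initial data) and on $\rho_\epsilon, j_\epsilon$ (from the uniform-in-time control provided by flocking), this produces the constant $C(a, \norm{j_0 - \overline{v}\rho_0})$. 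Collecting the skew-adjoint cancellation, the Gronwall-type interaction bound, and the explicitly decaying source terms, and integrating in time, gives \eqref{main_error_estimates}. I expect the main obstacle to be the $H^{-2}$ mapping estimates for the bilinear interaction terms: making the multiplication $H^{-2}\times(a\ast\,\cdot\,)\to H^{-2}$ rigorous requires pairing the Fourier decay of $a$ against the negative-order norms carefully, and securing the \emph{uniform-in-time} a priori bounds needed to keep $C(a, \norm{j_0 - \overline{v}\rho_0})$ finite for all $t$, rather than merely locally.
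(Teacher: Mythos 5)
Your proposal is correct and follows essentially the same route as the paper's proof: the same splitting of the residual into a closure error and an $\epsilon$-localization error, the same collapse of the interaction term onto the misalignment $j-\overline{v}\rho$, the same Fourier-based $H^{-2}$ multiplication and convolution bounds, and the same Gr\"onwall argument driven by the PDE flocking decay (Proposition \ref{local_result}) and the particle flocking decay \eqref{particle_flocking_bound}. The only cosmetic difference is that you run a differential energy estimate exploiting skew-adjointness of the transport pair in the weighted $\mathbf{H}^{-2}$ norm, whereas the paper uses the mild (Duhamel) formulation together with contractivity of the semigroup generated by that same operator --- two phrasings of the identical structural fact.
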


\begin{remark}

We conducted the proof of Theorem \ref{thm_error_PDE_particles_H_minus_2} in the $\mathbf{H}^{-2}$ norm, as this crucially allows us to avoid using the propagation of chaos bounds for the underlying particle system (\cite[Theorem 1.2]{nguyen2022propagation}). These propagation of chaos bounds are diverging in time, and we do not see an obvious way to use them to get a uniform-in-time estimate of the likes of \eqref{main_error_estimates} if we work in norms stronger the $\mathbf{H}^{-2}$. We defer further investigations on the matter to future works.
\end{remark}

\begin{remark} 
While Theorem \ref{thm_error_PDE_particles_H_minus_2} is the clean theoretical statement for the mismatch between PDE and particle system given in the $\mathbf{H}^{-2}$-norm, our numerical simulations also consider stronger norms (up to $L^2$). The evaluation of stronger norms, in conjunction with the presence of the smoothed dirac deltas $\delta_\epsilon$ shows a (naturally expected) dependence of the type $N^{-1}\epsilon^{-\beta}$ for some $\beta>0$, in analogy to several considerations given in \cite{cornalba2021well}.
\end{remark}

\section{PDE flocking: Proof of Theorem \ref{thm:all_PDE_flocking_results}}
\label{sec:PDEflock}

This section is devoted to proving all components of Theorem \ref{thm:all_PDE_flocking_results}.
It is convenient to treat the kernel $a$  in \eqref{eq:NonConstComm} as the sum of a ``constant'' part and a ``variable'' part, namely as
\begin{align}\label{potential_split}
    a(\f{x}) = C_a+\theta g(\f{x}) > 0, 
\end{align}
where $C_a > 0$, $g\in L^2$ and $\theta$  is a small parameter and we assume without loss of generality that $\theta \geq 0$. Although not unique, the representation \eqref{potential_split} will make our proofs conceptually more intuitive.

We start with dimension $d=1$, where the analytical tools at our disposal are stronger. In subsection \ref{subsec:1Dflock}, we first treat the case of constant kernel $a=C_a$ and then move on to consider the general $a$ as in \eqref{potential_split}. In subsection \ref{subsec:Weights}, we provide flocking estimates (again in dimension $d=1$), but for the weighted system \eqref{reduced_CS_rho_w}--\eqref{reduced_CS_j_w}. Finally, we devote Subsection \ref{sec_flock_PDE_high_dim} to proving flocking in dimension $d>1$ for a physically relevant, diffusive-type regularised version of \eqref{reduced_CS_rho_d}--\eqref{reduced_CS_j_d}.

\subsection{PDE flocking in dimension one}
\label{subsec:1Dflock}

\subsubsection{The case of constant kermel $a=C_a$}

In this setting, several steps of the proof are simplified. Presenting the key ideas in this case is intended to aid the reader in grasping the more general form explained below. 
\begin{lemma}[PDE flocking with constant interaction function $a = C_a$]\label{thm_flocking_alignment}
Let $(\rho,j)$ be the solution to \eqref{reduced_CS_rho}--\eqref{reduced_CS_j} started from $(\rho_0,j_0)$.  Furthermore, assume that the interaction function $a$ is constant $a = C_a$.  Then we have the exponential decay
\begin{align}\label{flocking_alignment}
    \|j(t) - \overline{v} \rho(t)\|^2 \leq \exp\{-2C_at\}\|j_0 - \overline{v} \rho_0\|^2,
\end{align}
that implies PDE flocking.
\end{lemma}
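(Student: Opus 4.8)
The plan is to reduce the statement to a standard $L^2$ energy estimate for a damped transport equation. The first step is to exploit that the kernel is constant. Since $a = C_a$, the convolutions collapse to the conserved masses recorded in \eqref{masses}: indeed $a \ast j = C_a \int_{\mathbb{T}} j\,dy = C_a \overline{v}$ and $a \ast \rho = C_a \int_{\mathbb{T}}\rho\,dy = C_a$. Substituting these into \eqref{reduced_CS_j} turns the momentum equation into the closed, affine form
$$\partial_t j = C_a(\overline{v}\rho - j) - \overline{v}^2 \partial_x \rho,$$
where in $d=1$ I write $\partial_x$ for $\nabla$.

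Next I would introduce the flocking defect $w := j - \overline{v}\rho$, which is precisely the quantity controlled by PDE flocking in Definition \ref{def:PDEflocking}. Differentiating in time and using both \eqref{reduced_CS_rho} and the simplified momentum equation above, I expect the zeroth-order and gradient terms to reorganise neatly: the damping term is $-C_a w$, while combining $-\overline{v}^2\partial_x\rho$ with the contribution $\overline{v}\,\partial_x j$ coming from $-\overline{v}\,\partial_t\rho = \overline{v}\,\partial_x j$ produces exactly $\overline{v}\,\partial_x w = \overline{v}\,\partial_x(j-\overline{v}\rho)$. This should yield the damped transport equation
$$\partial_t w = -C_a\, w + \overline{v}\,\partial_x w.$$

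Then I would close the argument with the energy method. Testing against $w$ gives $\tfrac{1}{2}\tfrac{d}{dt}\norm{w}^2 = -C_a\norm{w}^2 + \overline{v}\int_{\mathbb{T}} w\,\partial_x w\,dx$, and the transport contribution $\int_{\mathbb{T}} w\,\partial_x w\,dx = \tfrac{1}{2}\int_{\mathbb{T}}\partial_x(w^2)\,dx$ vanishes by periodicity on the torus. This leaves the differential inequality $\tfrac{d}{dt}\norm{w}^2 = -2C_a\norm{w}^2$, and Gr\"onwall's inequality delivers the claimed bound \eqref{flocking_alignment}; letting $t\to\infty$ then yields PDE flocking in the sense of \eqref{continuous_alignment}.

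The computation is essentially mechanical, so I do not anticipate a genuine obstacle. The only points requiring care are the justification of the manipulations — that the solution is regular enough to differentiate $\norm{w}^2$, integrate by parts, and invoke mass conservation, which I would borrow from the well-posedness theory used elsewhere in Section \ref{sec:PDEflock} — and the cancellation of the transport term, which is exactly the place where the periodic (torus) setting is essential.
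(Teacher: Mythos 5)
Your proof is correct and takes essentially the same route as the paper's: both exploit the constant kernel together with the conserved masses \eqref{masses} to reduce the convolution terms to the damping $-C_a\|j-\overline{v}\rho\|^2$, eliminate the remaining gradient terms by integration by parts on the torus, and conclude with Gr\"onwall. Your only (cosmetic) difference is that you recognise the transport structure $\overline{v}\,\partial_x(j-\overline{v}\rho)$ before testing, which packages the paper's four-term cancellation into the single identity $\int_{\mathbb{T}} w\,\partial_x w\,dx=0$; just note that your defect variable $w$ collides notationally with the weight $w$ used in \eqref{reduced_CS_rho_w}--\eqref{reduced_CS_j_w}, so it should be renamed if inserted into the text.
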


\begin{proof}
The existence of a unique global-in-time solution, $L^2\times L^2$-valued solution to \eqref{reduced_CS_rho}--\eqref{reduced_CS_j} is granted by the linearity of the equation (entailed by the constant interaction function $a$) as well as from the well-posedness analysis similar to that presented in \cite[Section 4]{Cornalba2019}. 
    Writing the differential of the process $j(t) - \overline{v}\rho(t)$, we obtain
    \begin{align}
    \partial_t (j - \overline{v}\rho) 
    & = \rho (a\ast j)  
    - j (a \ast \rho) 
    - \overline{v}^2\nabla\rho
    + \overline{v}\nabla \cdot j.
    \end{align}
    Therefore,
    \begin{align}\label{diff_square}
     \frac{1}{2}\partial_t\| j - \overline{v}\rho \|^2 & =      \langle \rho (a\ast j)  
    - j (a \ast \rho) 
    - \overline{v}^2\nabla\rho
    + \overline{v}\nabla \cdot j,
    j - \overline{v}\rho\rangle \nonumber\\
    & \stackrel{\eqref{masses}}{=} 
      C_a\langle \overline{v}\rho 
    - j ,
    j - \overline{v}\rho\rangle 
    + 
    \langle 
    - \overline{v}^2\nabla\rho
    + \overline{v}\nabla \cdot j,
    j - \overline{v}\rho\rangle \nonumber\\
    &  = - C_a\| j - \overline{v}\rho \|^2
    + 
    \langle 
    - \overline{v}^2\nabla\rho,j\rangle +
        \langle 
    - \overline{v}^2\rho,\nabla \cdot j\rangle
    + \langle \overline{v}\nabla \cdot j,j \rangle
    + \langle \overline{v}^3 \nabla \rho,\rho \rangle.  
    \end{align}
    Integration by parts implies that the sum of the last four terms is zero and an application of Gr\"onwall's estimate in differential form concludes the proof.
\end{proof}

\subsubsection{The case of general kernel $a=C_a +\theta g$}
The following proposition extends the validity of Lemma \ref{thm_flocking_alignment} by considering a suitable perturbation argument (entailed by the non-constant component $\theta g$ of the interaction function). 
\begin{prop}[PDE flocking with  interaction kernel $a = C_a +\theta g$]\label{local_result}
Let $(\rho,j)$ be the solution to \eqref{reduced_CS_rho}--\eqref{reduced_CS_j} with initial data $(\rho_0,j_0)$.  
Assume that the  interaction function has the form
 $a = C_a + \theta g$ >0, where $g\in L^{2}(\mathbb{T})$ and $\theta >0 $. 
 Furthermore, assume 
\begin{align}\label{ass_boundedness_rho} 
K := \left( 2\frac{\overline{v}^2\|\rho_0\|^2 + \|j_0\|^2}{\overline{v}^2} + \frac{2\cdot 3^2\|j_0 - \overline{v}\rho_0\|^2}{\overline{v}^2} \right)^{1/2} < \frac{C_a}{4\theta\|g\|}. 
\end{align}
Then we have global well-posedness of \eqref{reduced_CS_rho}--\eqref{reduced_CS_j} in $L^2 \times L^2$, and the flocking bound
\begin{align}\label{partial_flocking_alignment}
    \|\overline{v}\rho(t)-j(t)\|^2 \leq \exp\{-C_at\}\|j_0 -\overline{v} \rho_0\|^2, \qquad \mbox{for all }t\geq 0.
\end{align}
\end{prop}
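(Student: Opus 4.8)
The plan is to treat $\theta g$ as a perturbation of the constant-kernel case of Lemma \ref{thm_flocking_alignment}, and to run a continuity (bootstrap) argument whose central ingredient is an a priori $L^2$-bound on $\rho$. Local well-posedness in $L^2\times L^2$ I would obtain exactly as in the constant case, viewing the bilinear convolution terms $\rho(a\ast j)-j(a\ast \rho)$ as a locally Lipschitz perturbation of the linear part and invoking the fixed-point/semigroup machinery of \cite[Section 4]{Cornalba2019}; global well-posedness then follows a posteriori from the uniform bounds produced below.

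For the flocking estimate I would set $w:=j-\overline v\rho$ and repeat the computation of Lemma \ref{thm_flocking_alignment}. Splitting $a=C_a+\theta g$ and using the mass normalisations \eqref{masses}, the constant part reproduces the dissipative term $-C_a\|w\|^2$, the transport/pressure contributions $-\overline v^2\nabla\rho+\overline v\nabla\cdot j$ again integrate to zero, and the variable part contributes $\theta\langle \rho(g\ast j)-j(g\ast\rho),\,w\rangle$. The key algebraic observation is the identity $\rho(g\ast j)-j(g\ast\rho)=\rho(g\ast w)-w(g\ast\rho)$, which exposes a factor of $w$; combined with the convolution bound $\|g\ast\varphi\|_{L^\infty}\le\|g\|\,\|\varphi\|$ and Cauchy--Schwarz, this yields $|\theta\langle\cdots\rangle|\le 2\theta\|g\|\,\|\rho\|\,\|w\|^2$. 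Hence
\[
\tfrac12\partial_t\|w\|^2\le -\big(C_a-2\theta\|g\|\,\|\rho\|\big)\|w\|^2,
\]
so that, as long as $\|\rho(t)\|\le C_a/(4\theta\|g\|)$, the bracket is $\ge C_a/2$ and Gr\"onwall gives the claimed decay $\|w(t)\|^2\le e^{-C_at}\|w_0\|^2$.

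It therefore remains to propagate the a priori bound $\|\rho(t)\|\le C_a/(4\theta\|g\|)$, and this is the crux. I would control $\rho$ through the energy $E:=\overline v^2\|\rho\|^2+\|j\|^2$, whose time derivative enjoys a crucial cancellation: the transport and pressure pieces $\overline v^2\langle\rho,\partial_t\rho\rangle$ and $\langle j,-\overline v^2\nabla\rho\rangle$ cancel by integration by parts, leaving $\tfrac12\partial_tE=\langle j,\rho(a\ast j)-j(a\ast\rho)\rangle$. Rewriting this in terms of $w$ and bounding the variable part with the same identity and convolution estimate, the quadratic terms that would otherwise destabilise $E$ can be absorbed using the bootstrap hypothesis $\|\rho\|\le C_a/(4\theta\|g\|)$, producing an inequality of the form $\partial_t\sqrt E\le \tfrac{3C_a}{2}\|w\|$. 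Since $\|w\|$ is exponentially integrable with $\int_0^\infty\|w\|\,ds\le \tfrac{2}{C_a}\|w_0\|$, this integrates to $\sqrt{E(t)}\le \sqrt{E(0)}+3\|w_0\|$, whence $E(t)\le 2E(0)+18\|w_0\|^2$ and $\|\rho(t)\|\le \overline v^{-1}\sqrt{E(t)}\le K$. As $K<C_a/(4\theta\|g\|)$ strictly, the bootstrap bound is strictly improved on its interval of validity, so the continuity argument closes, the estimates hold for all $t\ge0$, and we obtain simultaneously global well-posedness and \eqref{partial_flocking_alignment}.

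The main obstacle, as the above makes clear, is the a priori $L^2$-control of $\rho$: a direct estimate of $\tfrac12\partial_t\|\rho\|^2$ produces an uncontrolled first-order term $\langle\partial_x\rho,w\rangle$, so $\|\rho\|$ cannot be bounded in isolation. The device that resolves this is precisely the combined energy $E$, in which the first-order pressure/transport terms cancel exactly, reducing everything to interaction terms already understood from the flocking estimate. Tracking the numerical constants through this step (the factors $3$ and $18=2\cdot3^2$) is exactly what pins down the smallness threshold \eqref{ass_boundedness_rho}.
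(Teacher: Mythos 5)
Your proposal is correct and follows essentially the same route as the paper's proof: the same perturbative bound $2\theta\|g\|\,\|\rho\|\,\|j-\overline v\rho\|^2$ on the variable part of the kernel, the same energy $\overline v^2\|\rho\|^2+\|j\|^2$ whose transport terms cancel under integration by parts, and the same bootstrap closure through the strict inequality $K<C_a/(4\theta\|g\|)$, reproducing the exact constants in \eqref{ass_boundedness_rho}. The only cosmetic differences are that you integrate $\partial_t\sqrt{E}$ directly where the paper cites a nonlinear Gr\"onwall inequality, and you phrase the stopping-time construction as a continuity argument; these are equivalent.
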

\begin{proof}[Sketch of proof of Proposition \ref{local_result}]
The proof consists of the following three steps that are proven in detail in the Appendix \ref{Appendix:local_result}:
\begin{description}
\item  I) The system \eqref{reduced_CS_rho}--\eqref{reduced_CS_j} admits a continuous-in-time, local $L^2\times L^2$-valued solution;
\item  II) The flocking bound \eqref{partial_flocking_alignment} holds up to the time 
\begin{align}\label{stopping_time}
 \tau := 
\left\{
\begin{array}{rl}
\inf\left\{t>0: \|\rho(t)\| > \frac{C_a}{4\theta\|g\|}\right\}, & \mbox{if infimum taken over non-empty set} \\
\infty, & \mbox{otherwise }
\end{array}
\right.
\end{align}
and
\item  III) The energy estimate  $\|\rho(t)\| < K$ holds -- at least -- for all $t\leq \tau$, where $K$ is defined by 
\eqref{ass_boundedness_rho}. 
\end{description}
Then it is straightforward to show that $\|\rho(t)\| \leq K$ for all $t>0$: if this were not the case, point I) would imply that $K < \|\rho(z)\| < \frac{C_a}{4\theta\|g\|}$ for some $z\leq \tau$. But this contradicts III). As $\|\rho(t)\| \leq K < \frac{C_a}{4\theta\|g\|}$ for all $t>0$, point II) implies that \eqref{partial_flocking_alignment} holds for all $t >0$, ending the proof.
\end{proof}
\begin{remark}
    The assumption \eqref{ass_boundedness_rho} is nothing but a smallness assumption on how much (in relative size) the interaction function $a$ is allowed to deviate from a constant value. This is visible in the ratio $C_a/\theta \|g\|_{\infty}$. 
    Moreover, the bound \eqref{ass_boundedness_rho} is, in most realistic scenarios, essentially independent of $\overline{v}$: this can be seen by taking the relevant case $\overline{v}^2\|\rho_0\|^2 \propto \|j_0\|^2$.
\end{remark}

\subsubsection{Weight-dependent kinetic term}
\label{subsec:Weights}

In this subsection we justify and analyze the modified model \eqref{reduced_CS_rho_w}--\eqref{reduced_CS_j_w}.

Upon considering the $\epsilon$-mollification introduced in \eqref{smoothed_densities}, the derivation of the 1d reduced PDE model \eqref{reduced_CS_rho}--\eqref{reduced_CS_j} relies on the approximation 
\begin{align}\label{Approx_j2}
N^{-1}\sum_{i=1}^{N}{v_i(t)^2\delta'_\epsilon(x-x_i(t))} \approx N^{-1}\sum_{i=1}^{N}{\overline{v}^2\delta'_\epsilon(x-x_i(t))} = \overline{v}^2 \nabla \rho_\epsilon(x,t)
\end{align}
see Subsection \ref{subsub:1d}.
While the approximation \eqref{Approx_j2} is correct in the flocking regime, it can potentially be quite coarse for small times. 

An important observation is that the term $\textstyle N^{-1}\sum_{i=1}^{N}, {v_i^2\delta'_\epsilon(x-x_i)}$, despite its complicated appearance, is the gradient of the strictly positive function $\textstyle N^{-1}\sum_{i=1}^{N}{v_i^2\delta_\epsilon(x-x_i)}$. Since $\rho_\epsilon$ is trivially positive by definition \eqref{smoothed_densities}, we can rewrite the left-hand-side of \eqref{Approx_j2} as
\begin{align*}
N^{-1}\sum_{i=1}^{N}{v_i(t)^2\delta'_\epsilon(x-x_i(t))} = \overline{v}^2\nabla\left( w(x,t)\rho_\epsilon(x,t)\right) ,
\end{align*}
where we have defined
\begin{align}\label{DefnWeight}
w(x,t) := \frac{N^{-1}\sum_{i=1}^{N}{v_i^2\delta_\epsilon(x-x_i(t))}}{\overline{v}^2\rho_\epsilon(x,t)} > 0.
\end{align}
Essentially, the weight $w$ provides an algebraic compensation to the model in the pre-flocking regime. In practice, computing the exact $w$ \eqref{DefnWeight} is inconvenient (it corresponds to simulating the particles, which is what we want to avoid), and this may only be feasible for the initial condition $w_0(x) = w(x,0)$ since, at time $t=0$, the particles' positions and velocities are either given or need to be simulated regardless.
Nonetheless, it makes sense to consider $w$ as a variable capable of mimicking relevant (less computationally demanding) properties of the particle system \emph{prior} to the flocking regime. 
Said differently, the hope is that, for some `properly tuned' weight $w$, the modified PDE \eqref{reduced_CS_rho_w}--\eqref{reduced_CS_j_w}
will be an even better approximation of the particle dynamics than \eqref{reduced_CS_rho}--\eqref{reduced_CS_j} as a result of the improved pre-flocking modelling given by the weight $w$ (so far, we have used the flocking approximation for \emph{all} times, i.e., $w\equiv	1$).

In this section, we show that we can replicate  the global well-posedness analysis and flocking property for \eqref{reduced_CS_rho_w}--\eqref{reduced_CS_j_w} (Proposition \ref{local_result_w}) for a relevant class of weights (uni-vocally determined once the particles' initial configuration is known), see Definition 
\ref{defn_admissible_weights} below. Furthermore, we numerically show that using a simple exponentially-converging weight $w$ significantly improves the PDE accuracy.

\begin{defn}[Admissible weights $w$]\label{defn_admissible_weights}
A weight function $w\colon \mathbb{T} \times [0,\infty) \rightarrow [0,\infty)$ is called an \emph{admissible weight} if the following hold:
\begin{itemize}
\item Regularity: $w \in C^{1,1}$ and there exist two constants $w_{max} \geq w_{min} > 0$ such that $w_{min} \leq w \leq w_{max}$ for all $t\geq 0$ and $x\in\mathbb{T}^d$.
\item Long-time behaviour: the weight $w$ agrees with one eventually, i.e., there exists $T_a<\infty$ such $w\equiv 1$ for all $t\geq T_a$.
\end{itemize} 
\end{defn}
Subject to $w$ being sufficiently small we prove the following global well-posedness and flocking result for \eqref{reduced_CS_rho_w}--\eqref{reduced_CS_j_w}, whose proof is presented in Appendix \ref{Appendix:weights_flocking}.

\begin{prop}[PDE flocking for weight-dependent setting]\label{local_result_w}
Consider $a = C_a + \theta g >0$ as in \eqref{potential_split}, where $g\in L^{2}$  and $\theta >0 $. Let $w$ be an admissible weight. Furthermore, assume that:
\begin{description}
\item [C1] The norms $\|1-w\|_{\infty}, \|\partial_t w\|_{\infty}, \|\nabla w\|_{\infty} $ are sufficiently small so that the following bound 
\begin{align}\label{ass_boundedness_rho_w} 
\overline{v}^2Q^2 > K & := 2\frac{w_{\max}}{w_{\min}}\eta(0)\exp\left\{\int_{0}^{\infty}{ \|\partial_t w(s)\|\frac{1}{2w_{\min}} ds }\right\} \nonumber\\
 & \quad + \frac{ 2\cdot 6^2 }{w_{\min}^2} \left( \sqrt{|R_0|} + \int_{0}^{t}{\sqrt{\beta(s)}ds} \right)^2 \exp\left\{ \int_{0}^{\infty}{ \|\partial_t w(r)\|_{\infty}\frac{1}{2w_{\min}}dr} \right\}, 
\end{align}
holds, for some fixed $Q < C_a/{8\theta\|g\|_\infty}$, where we have set
\begin{align*}
\eta(t) & := \overline{v}^2\|\rho(t)\|^2 + \|j(t)\|^2 \\
\beta(t) & := 2e^{-C_at}\int_{0}^{t}{e^{C_as} \tilde{\beta}(s) ds} + \overline{v}^2\|w-1\|^2Q^2, \\
\tilde{\beta} & := 2\overline{v}^2 Q^2 \left(\overline{v}\|\nabla w\|_{\infty} + \|\partial_t w\|_{\infty} \right) + \overline{v}^2\|\theta g\|Q^3\|1-w\|_{\infty}.
\end{align*}
\item [C2]
The property $\beta(t)\rightarrow 0$ as $t\rightarrow \infty$ holds.
\end{description}
Then we have global well-posedness of \eqref{reduced_CS_rho_w}--\eqref{reduced_CS_j_w} in $L^2 \times L^2$, and the flocking bound  
\begin{align}\label{partial_flocking_alignment_b}
\|j(t)-\overline{v}\rho(t)\|^2 & \leq 2|\langle j_0 - \overline{v}\rho_0, j_0 - \overline{v}w_0\rho_0 \rangle| e^{-C_a t} + \beta(t),
\end{align}
which holds for all $t>0$.
\end{prop}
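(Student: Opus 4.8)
The plan is to reproduce, for the weighted system \eqref{reduced_CS_rho_w}--\eqref{reduced_CS_j_w}, the three-step scheme that proves Proposition \ref{local_result}. First I would establish (I) local-in-time existence of an $L^2\times L^2$-valued solution; this is a routine adaptation of the energy arguments of \cite[Section 4]{Cornalba2019} and of Proposition \ref{local_result}, since the only new term $-\overline{v}^2\nabla((w-1)\rho)$ is a perturbation with $C^{1,1}$, uniformly bounded coefficients (Definition \ref{defn_admissible_weights}). Then I would prove (II) that the flocking bound \eqref{partial_flocking_alignment_b} holds up to the stopping time at which $\eta(t) = \overline{v}^2\|\rho(t)\|^2 + \|j(t)\|^2$ first reaches the threshold $\overline{v}^2 Q^2$ (the analogue of \eqref{stopping_time}), and (III) an energy estimate showing that, on this interval, assumption C1 forces $\eta(t)\leq K < \overline{v}^2 Q^2$. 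A continuation argument identical in spirit to the one closing Proposition \ref{local_result} --- a crossing of the threshold would, by continuity, force an intermediate value between $K$ and $\overline{v}^2 Q^2$, contradicting (III) --- then promotes the stopping time to $+\infty$, so that (II) holds for all $t$.

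The decisive new idea, and the engine of step (II), is to differentiate not $\|j-\overline{v}\rho\|^2$ but the mixed functional $\Phi(t) := \langle j - \overline{v}\rho,\, j - \overline{v}w\rho\rangle$, whose value at $t=0$ is exactly the quantity $\langle j_0 - \overline{v}\rho_0, j_0 - \overline{v}w_0\rho_0 \rangle$ appearing in \eqref{partial_flocking_alignment_b}. Writing $P := -\overline{v}^2\nabla(w\rho) + \overline{v}\nabla\cdot j$ for the transport part of $\partial_t(j-\overline{v}\rho)$, the key algebraic fact --- verified by a single integration by parts on $\mathbb{T}$ --- is that $\langle P,\, j - \overline{v}w\rho\rangle = 0$. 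Testing the error $j-\overline{v}\rho$ against the \emph{weighted} error $j-\overline{v}w\rho$ is precisely what cancels the otherwise uncontrolled derivatives of $\rho$ and $j$: after splitting $a = C_a + \theta g$ as in \eqref{potential_split}, using $\rho(a\ast j) - j(a\ast\rho) = -C_a(j-\overline{v}\rho) + \theta\bigl(\rho(g\ast(j-\overline{v}\rho)) - (j-\overline{v}\rho)(g\ast\rho)\bigr)$, and integrating by parts once more, every remaining transport contribution collapses onto a factor $\nabla w$, $\partial_t w$ or $(1-w)$, with no surviving gradient of $\rho$ or $j$. Bounding these by Cauchy--Schwarz and Young, using the bilinear estimate $\|\rho(g\ast\zeta) - \zeta(g\ast\rho)\| \leq 2\|g\|\|\rho\|\|\zeta\|$ valid for $\zeta\in L^2$, and absorbing the perturbative interaction term, of size $4\theta\|g\|\|\rho\|\,\|j-\overline{v}\rho\|^2$, into the dissipative term $-C_a\|j-\overline{v}\rho\|^2$ produced alongside $-C_a\Phi$ (licensed by $\eta\leq\overline{v}^2 Q^2$ together with $Q < C_a/(8\theta\|g\|_\infty)$), I would arrive at the differential inequality
\begin{align*}
\partial_t\Phi \leq -C_a\Phi + \tilde\beta(t)
\end{align*}
on the stopping interval, with $\tilde\beta$ exactly the source in the statement (its three summands matching the $\nabla w$, $\partial_t w$ and $\theta g\,(1-w)$ contributions, each estimated once $\eta\leq\overline{v}^2 Q^2$).

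Grönwall's inequality then gives $\Phi(t)\leq \Phi(0)e^{-C_a t} + e^{-C_a t}\int_0^t e^{C_a s}\tilde\beta(s)\,ds$; since $\Phi$ is not sign-definite one retains $|\Phi(0)|$, which explains the absolute value in \eqref{partial_flocking_alignment_b}. To recover the genuine error I would use $j-\overline{v}\rho = (j-\overline{v}w\rho) + \overline{v}(w-1)\rho$, so that Young's inequality yields $\|j-\overline{v}\rho\|^2 \leq 2\Phi + \overline{v}^2\|(1-w)\rho\|^2$; the factor $2$ and the additive remainder $\overline{v}^2\|w-1\|^2 Q^2$ in $\beta$ arise precisely here. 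For the energy estimate (III), I would differentiate $\eta$ directly: the pure transport terms again cancel up to a $\nabla w$ remainder, the constant part of the kernel gives a controlled contribution, and the rest is dominated by $\|j-\overline{v}\rho\|$ --- the quantity already shown to decay in (II) --- times $\|\partial_t w\|, \|\nabla w\|, \|1-w\|$ and powers of $Q$; integrating in time (and invoking $\beta(t)\to 0$, condition C2) produces the $\bigl(\sqrt{|R_0|} + \int_0^t\sqrt{\beta(s)}\,ds\bigr)^2$ structure of \eqref{ass_boundedness_rho_w} and bounds $\eta$ by $K$. I expect the genuine difficulty to lie in closing this loop between (II) and (III): the flocking estimate needs $\eta\leq\overline{v}^2 Q^2$ to absorb the perturbative kernel term, while the energy estimate needs the flocking decay to prevent $\eta$ from growing. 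Resolving it relies on the smallness quantified by C1, on the eventual triviality $w\equiv 1$ for $t\geq T_a$ (which makes $\tilde\beta$ compactly supported in time, so all the time-integrals are finite), and on the continuity/bootstrap argument on the stopping interval.
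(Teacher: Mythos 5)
Your step (II) — the engine of the proof — is exactly the paper's argument: your mixed functional $\Phi(t)=\langle j-\overline{v}\rho,\,j-\overline{v}w\rho\rangle$ is precisely the quantity $R$ the paper differentiates, your stopping time, the differential inequality $\partial_t\Phi\leq -C_a\Phi+\tilde{\beta}$, the Gr\"onwall step, the retention of $|\Phi(0)|$, and the recovery $\|j-\overline{v}\rho\|^2\leq 2\Phi+\overline{v}^2\|w-1\|^2Q^2$ all coincide with the paper's Steps IIa)--IIb), and steps (I) and the final bootstrap are likewise the same.

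However, your step (III) has a genuine gap. You propose to differentiate $\eta=\overline{v}^2\|\rho\|^2+\|j\|^2$ directly and assert that ``the pure transport terms again cancel up to a $\nabla w$ remainder.'' They do not. In $d=1$ the transport contribution to $\tfrac12\partial_t\eta$ is
\begin{align*}
-\overline{v}^2\langle \rho,\nabla\cdot j\rangle - \overline{v}^2\langle j, \nabla(w\rho)\rangle
= -\overline{v}^2\langle j,(\nabla w)\rho\rangle - \overline{v}^2\langle j,(w-1)\nabla\rho\rangle,
\end{align*}
where the exact part $\int\nabla\cdot(\rho j)=0$ has been integrated away. The first term is the benign $\nabla w$ remainder you anticipate, but the second carries $\nabla\rho$ (or, offloading the derivative the other way, $\nabla\cdot j$ against $(1-w)\rho$); since the solution is only $L^2\times L^2$-valued, this term is not controllable — indeed not even well-defined — and the smallness of $\|w-1\|_\infty$ is of no help against an unbounded factor. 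This is exactly the pitfall the whole proposition must engineer around (no derivative may ever land on $\rho$ or $j$), a principle you state correctly for step (II) but then violate in step (III). The paper's remedy is to differentiate the \emph{weighted} energy $\eta_w:=\overline{v}^2\int w\rho^2+\|j\|^2$ instead: there the transport terms are
\begin{align*}
-\overline{v}^2\langle w\rho,\nabla\cdot j\rangle - \overline{v}^2\langle j,\nabla(w\rho)\rangle = -\overline{v}^2\int \nabla\cdot(w\rho\, j)=0,
\end{align*}
i.e.\ they cancel identically, leaving only $\tfrac12\overline{v}^2\langle(\partial_t w)\rho,\rho\rangle$ plus the interaction term (cfr.\ \eqref{differential_eta_w}). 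Since $w_{\min}\leq w\leq w_{\max}$ (Definition \ref{defn_admissible_weights}), $\eta_w$ is comparable to $\eta$, which is also where the ratio $w_{\max}/w_{\min}$ and the $\partial_t w$ exponentials in \eqref{ass_boundedness_rho_w} originate; the nonlinear Gr\"onwall argument then closes as in Proposition \ref{local_result}. With this replacement your argument goes through; as written, step (III) fails.
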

\begin{remark}\label{smallness_of_w} Despite the apparent convoluted appearance of some constants in Proposition \ref{local_result_w}, note that the conditions {\bfseries C1--C2}
can always be satisfied by choosing weight $w$ with sufficiently small norms for $\|w-1\|_{\infty}, \|\partial_t w\|_{\infty}, \|\nabla w\|_{\infty} $. In fact, Proposition \ref{local_result_w} is a `continuous perturbation' of the corresponding result for $w\equiv 1$ (Proposition \ref{local_result}).
\end{remark}
While we defer quantitative results for the error between the weighted PDE \eqref{reduced_CS_rho_w}--\eqref{reduced_CS_j_w} to future work, we were able to numerically verify that a very simple choice of weight $w$ grants a significant experimental reduction of such error. Namely, the choice of $w$ that we adopt in the experiment is 
\begin{align}\label{exp_weight}
    w(x,t) = 1-e^{-C_a t} + e^{-C_a t}w_0(x) ,
\end{align}
where $w_0(x)$ is the exact weight at time zero. While this weight is not admissible, the intuition for \eqref{exp_weight} being a good weight is the following:
    \eqref{exp_weight} is exact at time zero, consistent in the flocking regime, and following exponential decay with a rate compatible with that of the PDE/CS flocking estimates.

As a qualitative demonstration of the impact this choice of weight has on the discrepancy between the particle system and reduced PDE, in Fig.~\ref{fig:3} we show results for the Cucker-Smale model as well as PDE with $w=1$ and weight \eqref{exp_weight}. In particular, we show $\rho(x,t=2)$ and $j(x,t=2)$ for the one-dimensional setting for initial data with a large velocity spread (uniformly distributed on the interval $[-50,50]$), with $\epsilon = 5$. We set $r = 1/2$ and $\lambda =50$ in the interaction potential \eqref{eq:NonConstComm}. As the velocity spread is large, we would not expect the reduced model to accurately reproduce the results of the particle system, which can be seen by the mismatch between the red and black lines in the figure. The inclusion of the weight \eqref{exp_weight} with $C_a = \lambda/2$ (blue line) can be seen to cause a significant increase in accuracy with no additional computational cost when computing the PDE. While this is only a qualitative example, in Fig.~\ref{fig:PDE_closing_comp} we present more extensive numerical results suggesting that the inclusion of the weight decreases the discrepancy between particle and reduced system by a factor of approximately five.

\begin{figure}[h]
    \includegraphics[width=0.99\linewidth]{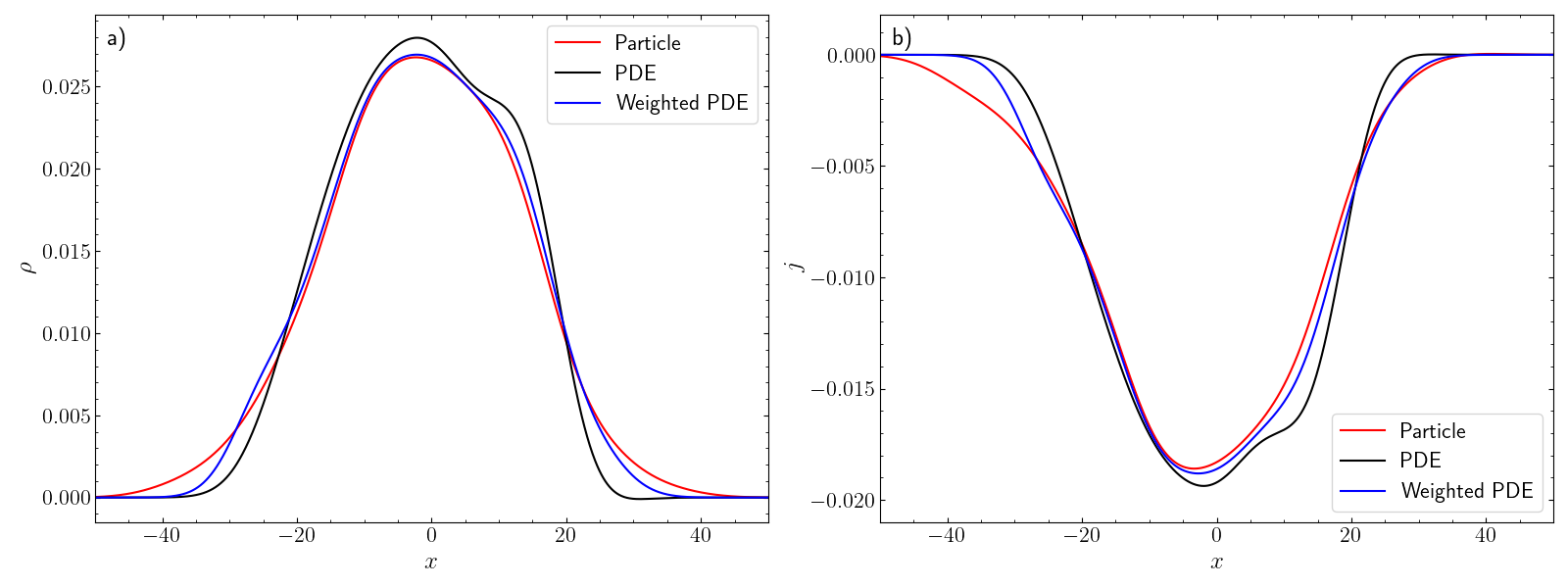}
\caption{Results obtained for the Cucker-Smale model \eqref{particles_det_x}-\eqref{particles_det_v}, reduced PDE \eqref{reduced_CS_rho}-\eqref{reduced_CS_j} and reduced PDE with the weight-dependent kinetic term \eqref{reduced_CS_rho_w}-\eqref{reduced_CS_j_w} in one dimension, where $w$ is given by \eqref{exp_weight}. a) The empirical density $\rho$ and  b) the empirical momentum density $j$, at $t=2$, evaluated for the particle system using a kernel with $\epsilon = 5$.} \label{fig:3}
\end{figure}

\subsection{PDE flocking in dimension $d>1$}
\label{sec_flock_PDE_high_dim}

In contrast to the one-dimensional case, and mainly due to the degeneracy of the divergence operator, it is not straightforward to prove both flocking and well-posedness for the PDE \eqref{reduced_CS_rho_d}--\eqref{reduced_CS_j_d}. 
For that reason, we introduce a PDE regularisation, which has the goal of keeping the norms $\|\rho\|$, $ \|\f{j}\|$ from getting too large (thus providing global well-posedness) while -- at the same time -- preserving PDE flocking.

For this regularisation, we introduce the following notation. Let $G_{h,d}$ be a grid of mesh size $h$ over the domain $\mathbb{T}^d$, and let $\mathcal{C}_{h,d}$ be the set of grid cells of $G_{h,d}$. 
For every $c\in\mathcal{C}_{h,d}$, we introduce the notation
\begin{align*}
\|f\|_{c} & := \|f\|_{L^2(c)}, \qquad
\|\rho\|_{\nabla,c}  := \|\nabla\rho\|_{c}, \qquad \|\f{j}\|_{\nabla,c} := \left(\sum_{z=1}^{d}{\|\nabla j_z\|_{c}^2}\right)^{1/2}, \\
\|(\rho,\f{j})\|_{\nabla,c} & := \left(|\overline{\f{v}}|^2 \|\rho\|_{\nabla,c}^2 + \|\f{j}\|^2_{\nabla,c} \right)^{1/2}.
\end{align*}
Furthermore, we introduce a smooth cut-off function $\varphi:[0,+\infty)\rightarrow [0,+\infty) $ 
\begin{align}\label{cutoff}
\varphi(y) = 
\left\{
\begin{array}{ll}
0, & \mbox{if } y < V h^d, \\
\mbox{smooth transition}, & \mbox{if }y \in [V h^d, (V+1)h^d), \\
W, & \mbox{if } y \geq (V+1)h^d,
\end{array}
\right. 
\end{align}
for some constants $V, W$ to be chosen. Finally, let $e_c$ be a hat function with unitary mass, and with support on a neighbourhood of cell $c$. In particular, this implies that $e_c(x)\gtrsim h^{-d}$ on cell $c$.

The regularised PDE then reads
\begin{align}
    \partial_t \rho(x,t) & = - \nabla \cdot \f{j}  
    + \sum_{c\in\mathcal{C}_{h,d}}{\varphi(\|\rho,\f{j}\|_{\nabla,c})\nabla\cdot\left(e_c\nabla\rho\right)}\label{reg_reduced_CS_rho_d},\\
    \partial_t j_m(x,t) & = \left\{\rho (a\ast j_m)  - j_m (a \ast \rho) - \overline{v}_m \nabla \cdot \f{j}\right\}  \nonumber\\
	& \quad \quad + \sum_{c\in\mathcal{C}_{h,d}}{\varphi(\|\rho,\f{j}\|_{\nabla,c})\nabla\cdot\left(e_c\nabla j_m\right)}, \qquad m=1,\dots,d.   
    \label{reg_reduced_CS_j_d}
\end{align}
\begin{remark}
In practical terms, the grid $G_{h,d}$ will often coincide with the grid which we use in our numerical discretisations, so this formulation is convenient.
\end{remark}

First, we show that the PDE \eqref{reg_reduced_CS_rho_d}--\eqref{reg_reduced_CS_j_d} satisfies the flocking property.
\begin{lemma}\label{flocking_property_multi_d}
For $\rho,\f{j}$ solving \eqref{reg_reduced_CS_rho_d}--\eqref{reg_reduced_CS_j_d}, we have the bound $\|\f{j}(\cdot,t)-\overline{\f{v}} \rho(\cdot,t)\|^2 \leq e^{-C_a t}\|\f{j}_0(\cdot)-\overline{\f{v}} \rho_0(\cdot)\|^2$ up to the stopping time $\tau := \inf\{t>0: \|\rho(\cdot,t)\| \geq C_a/(4\theta\|g\|_\infty)\}$, where, as before, we have decomposed the interaction function $a = C_a +\theta g$.
\end{lemma}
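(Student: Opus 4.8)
The plan is to mirror the differential-energy argument used for Lemma \ref{thm_flocking_alignment} and Proposition \ref{local_result}, carried out for the alignment error $\f{k} := \f{j} - \overline{\f{v}}\rho$ (componentwise $k_m = j_m - \overline{v}_m\rho$), and to show that the diffusive regularisation only \emph{helps}. First I would subtract $\overline{v}_m$ times \eqref{reg_reduced_CS_rho_d} from \eqref{reg_reduced_CS_j_d}. The transport terms $-\overline{v}_m\nabla\cdot\f{j}$ in \eqref{reg_reduced_CS_j_d} and the $+\overline{v}_m\nabla\cdot\f{j}$ produced by $-\overline{v}_m\partial_t\rho$ cancel exactly, and --- crucially --- since the scalar cut-off weight $\varphi(\|\rho,\f{j}\|_{\nabla,c})$ is independent of the component index $m$, the two regularisation sums recombine by linearity into a single diffusion of $k_m$. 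This leaves
\[
\partial_t k_m = \rho\,(a\ast j_m) - j_m\,(a\ast\rho) + \sum_{c\in\mathcal{C}_{h,d}}\varphi(\|\rho,\f{j}\|_{\nabla,c})\,\nabla\cdot(e_c\nabla k_m).
\]

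Next I would form $\tfrac12\partial_t\norm{\f{k}}^2 = \sum_m \langle \partial_t k_m, k_m\rangle$ and treat the three contributions separately. For the interaction term I split $a = C_a + \theta g$ as in \eqref{potential_split}: the constant piece, using the conserved masses \eqref{masses} (so that $C_a\ast j_m$ collapses to $C_a\overline{v}_m$ and $C_a\ast\rho$ to $C_a$), yields exactly $\sum_m\langle -C_a k_m, k_m\rangle = -C_a\norm{\f{k}}^2$, precisely as in the one-dimensional constant-kernel computation; the substitution $j_m = k_m + \overline{v}_m\rho$ shows the $\theta g$-piece reduces to the genuine perturbation $\theta\sum_m\langle \rho(g\ast k_m) - k_m(g\ast\rho), k_m\rangle$, expressed purely in $\f{k}$.

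The decisive step is the regularisation contribution, which I expect to be the main point requiring care. Integrating by parts on the boundaryless torus gives
\[
\sum_m\sum_{c}\varphi(\|\rho,\f{j}\|_{\nabla,c})\,\langle \nabla\cdot(e_c\nabla k_m),\,k_m\rangle = -\sum_m\sum_{c}\varphi(\|\rho,\f{j}\|_{\nabla,c})\int_{\mathbb{T}^d} e_c\,|\nabla k_m|^2 \,\le\, 0,
\]
since $\varphi\ge 0$ by construction \eqref{cutoff} and the hat functions satisfy $e_c\ge 0$. Thus the regularisation is dissipative \emph{for the alignment error as well} and can simply be discarded with the correct sign. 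The whole subtlety lies in the bookkeeping that produced the common scalar factor $\varphi(\|\rho,\f{j}\|_{\nabla,c})$: it is exactly the $m$-independence of this weight that lets the $\rho$- and $j_m$-diffusions fuse into a single sign-definite diffusion of $k_m$, so that the regularisation never competes with flocking.

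Finally, for the perturbation term I would use Young's convolution inequality together with Cauchy--Schwarz to obtain a bound of the form $\lesssim \theta\|g\|_\infty\,\norm{\rho}\,\norm{\f{k}}^2$ (absorbing the torus measure into the implied constant). Invoking the definition of the stopping time $\tau$, on $[0,\tau)$ we have $\norm{\rho} < C_a/(4\theta\|g\|_\infty)$, so this perturbation is absorbed into half of the $-C_a\norm{\f{k}}^2$, leaving $\partial_t\norm{\f{k}}^2 \le -C_a\norm{\f{k}}^2$. Grönwall's inequality in differential form then delivers $\norm{\f{k}(t)}^2\le e^{-C_a t}\norm{\f{k}_0}^2$ on $[0,\tau)$, which is the claim. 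A secondary technical point is ensuring the local solution is regular enough (or working in a suitable weak/mild formulation) to justify the integrations by parts and the differential Grönwall step up to $\tau$; the promotion of this conditional estimate to a global one, via the energy control of $\norm{\rho}$, is what Lemma \ref{well_posedness_reg} addresses separately.
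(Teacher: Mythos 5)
Your proposal is correct and follows essentially the same route as the paper's proof: derive the equation for $k_m = j_m - \overline{v}_m\rho$ (transport terms cancelling and the two regularisation terms fusing, thanks to the common cut-off $\varphi(\|\rho,\f{j}\|_{\nabla,c})$, into a single dissipative diffusion of $k_m$ that is discarded after integration by parts), then reuse the $d=1$ argument of Proposition \ref{local_result}, part II) --- constant part of $a$ giving $-C_a\|\f{k}\|^2$ via mass conservation, the $\theta g$ perturbation absorbed to half of it on $[0,\tau)$ by the stopping-time bound on $\|\rho\|$, and Gr\"onwall. The paper states this tersely as ``in strict analogy with Proposition \ref{local_result}, part II)''; you have simply spelled out the same steps.
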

\begin{proof}
The proof is in strict analogy with that of Proposition \ref{local_result}, part II). Namely, by taking the time differential of $j_m - \overline{v}_m\rho$ and testing with $j_m - \overline{v}_m\rho$, we obtain
\begin{align}\label{reg_flocking_neg_contribution}
 \partial_t \frac{1}{2}\|j_m - \overline{v}_m\rho\|^2 
 &= 
 \left\langle \rho (a\ast j_m)  - j_m (a \ast \rho) , j_m - \overline{v}_m\rho \right\rangle \nonumber \\
 & \quad  - \left\langle \sum_{c\in\mathcal{C}_{h,d}}{\varphi(\|\rho,\f{j}\|_{\nabla,c}) e_c\nabla (j_m - \overline{v}_m\rho)} , \nabla(j_m - \overline{v}_m\rho) \right\rangle \\
 &  \leq \left\langle \rho (a\ast j_m)  - j_m (a \ast \rho) , j_m - \overline{v}_m\rho \right\rangle,  \nonumber
\end{align}
where we used that the term \eqref{reg_flocking_neg_contribution} is non-positive. The proof is concluded by reusing the same arguments as in Proposition \ref{local_result}, point II), and summing up over $m=1,\dots,d$.
\end{proof}
In particular, we see that the regularisation introduced does not affect the flocking properties of our PDE model. We now need to show that the regularisation also gives us global well-posedness. In particular, we need (as in other previous arguments) a uniform bound of the type
$
\|\rho\| < C_a/(4\theta\|g\|_{\infty}),
$ 
which will then allow us to extend the flocking property of Lemma \ref{flocking_property_multi_d} to all times $t>0$. This is in total analogy to the reasoning of Proposition \ref{local_result}, point III). 

\begin{lemma}\label{well_posedness_reg}
Assume the existence of $\gamma>0$ such that 
\begin{align}\label{properties_constant_A_1}
\frac{|\overline{v}|^2(\gamma^2 - 2C_P^2)}{2C^2_P} & > 1, \\
3\gamma^2 + \frac{2\|\f{j}_0 - \overline{\f{v}}\rho_0\|^2}{|\overline{\f{v}}|^2}  & < \left(\frac{C_a}{4\theta\|g\|_{\infty}}\right)^2,\label{properties_constant_A_3}
\end{align}
where $C_P$ is the Poincar\'e constant for the domain $\mathbb{T}^d$.
Then we can choose the constants $V,W$ in \eqref{cutoff} such that we have $\|\rho\| < C_a/(4\theta\|g\|_{\infty})$. 
and we have PDE flocking for \eqref{reg_reduced_CS_rho_d}--\eqref{reg_reduced_CS_j_d} over all times $t>0$.
\end{lemma}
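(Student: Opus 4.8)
The plan is to reproduce the three-step scheme of Proposition~\ref{local_result} (local existence, flocking up to a stopping time, and an a~priori bound that excludes the stopping time), the only genuinely new ingredient being that the gradient growth which is harmless in $d=1$ must now be absorbed by the regularising diffusion. Local well-posedness in $L^2\times L^2$ is inherited from \cite[Section~4]{Cornalba2019}: since $\varphi$ is smooth and the hat functions $e_c$ are fixed, the extra terms in \eqref{reg_reduced_CS_rho_d}--\eqref{reg_reduced_CS_j_d} are locally Lipschitz on $L^2\times L^2$. Lemma~\ref{flocking_property_multi_d} already gives the flocking decay $\|\f{j}(t)-\overline{\f{v}}\rho(t)\|\le \|\f{j}_0-\overline{\f{v}}\rho_0\|$ for all $t<\tau$, with $\tau=\inf\{t>0:\|\rho(t)\|\ge C_a/(4\theta\|g\|_\infty)\}$. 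Hence the entire statement reduces to the a~priori bound $\|\rho(t)\|<C_a/(4\theta\|g\|_\infty)$, which forces $\tau=\infty$ and upgrades Lemma~\ref{flocking_property_multi_d} to a global flocking statement — exactly as in Proposition~\ref{local_result}, part~III).

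For the a~priori bound I would test the momentum equation \eqref{reg_reduced_CS_j_d} against $\f{j}$ and the mass equation \eqref{reg_reduced_CS_rho_d} against $\rho$, and track the energy $\eta=|\overline{\f{v}}|^2\|\rho\|^2+\|\f{j}\|^2$. Splitting $a=C_a+\theta g$, the constant part of the convolution contributes the favourable dissipative term $-C_a\|\f{j}-\overline{\f{v}}\rho\|^2$, whereas the perturbation is of the cubic type $\theta\|g\|\,\|\rho\|\,\|\f{j}\|^2$ and is therefore subordinate precisely on $\{t<\tau\}$, where $\|\rho\|<C_a/(4\theta\|g\|_\infty)$ — this is the structural reason for the particular form of the threshold. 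The torus lets me simplify the transport contributions: using $\langle\overline{\f{v}}\rho,\nabla\rho\rangle=\tfrac12\overline{\f{v}}\cdot\int_{\mathbb{T}^d}\nabla(\rho^2)=0$ one gets $\langle\f{j},\nabla\rho\rangle=\langle\f{j}-\overline{\f{v}}\rho,\nabla\rho\rangle$, so that the transport terms reduce to pairings of the (exponentially small) flocking gap $\f{j}-\overline{\f{v}}\rho$ against $\nabla\rho$ and $\nabla\f{j}$. These are the only terms carrying a genuine gradient, and controlling them is what the regularisation is for.

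The crux is converting the localised dissipation into coercive control. Since $e_c\gtrsim h^{-d}$ on $c$, each \emph{active} cell contributes $\langle e_c\nabla\rho,\nabla\rho\rangle\gtrsim h^{-d}\|\nabla\rho\|_c^2$ with weight $W$; on \emph{inactive} cells $\|\rho,\f{j}\|_{\nabla,c}<(V+1)h^d$, so $\|\nabla\rho\|_c^2\le|\overline{\f{v}}|^{-2}\|\rho,\f{j}\|_{\nabla,c}^2$ is of order $(V+1)^2h^{2d}/|\overline{\f{v}}|^2$ and, summed over the order $h^{-d}$ cells, only a slice of size $(V+1)^2h^{d}/|\overline{\f{v}}|^2$ of $\|\nabla\rho\|^2$ (and likewise for $\|\nabla\f{j}\|^2$) can hide there. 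Thus the diffusion dominates a fixed multiple of $Wh^{-d}(\,|\overline{\f{v}}|^2\|\nabla\rho\|^2+\|\nabla\f{j}\|^2)$ up to a controlled remainder, which first absorbs the gradient-valued transport terms by Young's inequality and then, via the Poincaré inequality $\|\nabla\rho\|\ge C_P^{-1}\|\rho-\bar\rho\|$ on $\mathbb{T}^d$, turns into genuine dissipation of the energy. Hypotheses \eqref{properties_constant_A_1}--\eqref{properties_constant_A_3} are exactly the bookkeeping that makes a consistent choice of $\gamma$, $V$ and $W$ possible: \eqref{properties_constant_A_1} guarantees, through $C_P$ and $|\overline{\f{v}}|$, that once $\|\rho\|\ge\gamma$ the gradient is large enough for the cutoff to be genuinely active and for dissipation to beat the forcing, and \eqref{properties_constant_A_3} ensures that the momentum bound $\|\f{j}\|^2\lesssim\tfrac32|\overline{\f{v}}|^2\gamma^2$ it produces, combined with the flocking gap through $\|\rho\|^2\le 2|\overline{\f{v}}|^{-2}(\|\f{j}\|^2+\|\f{j}-\overline{\f{v}}\rho\|^2)$, yields $\|\rho\|^2\le 3\gamma^2+2|\overline{\f{v}}|^{-2}\|\f{j}_0-\overline{\f{v}}\rho_0\|^2<(C_a/(4\theta\|g\|_\infty))^2$.

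I would close with the same continuity/bootstrap step as in Proposition~\ref{local_result}, part~III): on $\{t<\tau\}$ the estimate keeps $\|\rho(t)\|$ strictly below $C_a/(4\theta\|g\|_\infty)$, so if $\tau$ were finite, continuity of $t\mapsto\|\rho(t)\|$ would force $\|\rho(\tau)\|=C_a/(4\theta\|g\|_\infty)$, a contradiction; hence $\tau=\infty$ and Lemma~\ref{flocking_property_multi_d} gives PDE flocking for all $t>0$. I expect the main obstacle to be precisely the passage from the \emph{cell-localised} cutoff $\varphi(\|\rho,\f{j}\|_{\nabla,c})$ to a \emph{global} coercive bound: one must quantify exactly how much gradient energy can accumulate in inactive cells, and then choose $V,W$ (hence the activation threshold $Vh^d$) together with $\gamma$ so that the Poincaré-driven dissipation strictly dominates the transport forcing while \eqref{properties_constant_A_3} still holds. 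A secondary subtlety is the circularity between the flocking bound, valid only up to $\tau$, and the $\rho$-bound needed to prove $\tau=\infty$, which the bootstrap must resolve without losing the subordination of the $\theta g$ term.
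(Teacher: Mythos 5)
Your proposal follows essentially the same route as the paper's proof: the energy functional $\eta=|\overline{\f{v}}|^2\|\rho\|^2+\|\f{j}\|^2$, the Poincar\'e inequality to force large gradients once $\|\rho\|\geq\gamma$ (which is exactly the role of \eqref{properties_constant_A_1}), the active/inactive cell splitting with $V,W$ chosen so that the localised dissipation makes $\partial_t\eta\leq 0$, and the final contradiction at the last crossing of $\gamma$ combining the flocking decay of Lemma \ref{flocking_property_multi_d} with \eqref{properties_constant_A_3} to keep $\|\rho(\tau)\|$ strictly below $C_a/(4\theta\|g\|_{\infty})$. The minor differences (handling the transport terms via the flocking gap rather than direct Cauchy--Schwarz plus Poincar\'e, and slightly different bookkeeping of the inactive-cell gradient energy) are cosmetic and do not change the argument.
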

We defer the proof of Lemma \ref{well_posedness_reg} to Appendix \ref{app:multi_d_flocking}.
\begin{remark}
Even though the constant in the left-hand-side of \eqref{properties_constant_A_3} has a multiplicative factor $|\overline{\f{v}}|^{-2}$, this is not a problem, since the component $\|\f{j}_0 - \overline{\f{v}}\rho_0\|^2$ is supposed, in realistic applications, to scale with $|\overline{\f{v}}|^2$, leading to a quantity of order one. This is in analogy to what we required in \eqref{ass_boundedness_rho}. Moreover, the threshold $V$ in \eqref{cutoff} scales with $|\overline{\f{v}}|^2$, as we can see from \eqref{choice_K}. This is also not a problem, as the same prefactor appears in front of $\|\nabla\rho\|_c^2$ in the definition of the activation quantity $\|\rho,\f{j}\|_{\nabla,c}$.
 Hence, in practice, subject to reasonable initial data $(\rho_0,\f{j_0})$ and big enough $C_a/(4\theta\|g\|_{\infty})$ (i.e. for a potential $a$ with not-too-large relative oscillations) the setting of Lemma \ref{well_posedness_reg} is not affected the scaling in $|\overline{\f{v}}|^2$.
\end{remark}

\label{subsec:nDflock}

\section{Error between reduced PDE and particle system: Proof of Theorem \ref{thm_error_PDE_particles_H_minus_2}}
\label{sec:ReductionAccuracy}

The proof of Theorem \ref{thm_error_PDE_particles_H_minus_2} also uses the following result, which quantifies the discrepancy caused by mollifying the Dirac deltas with smooth kernels $\delta_\epsilon$ in the definition of the particle empirical densities.

\begin{prop}[$\epsilon$-mollifying error in dimension $d=1$]\label{prop:Mollifying_err} Assume the kernel $a$ has bounded first derivative. Upon replacing the Dirac delta with von Mises kernels $\delta_\epsilon$, the term $T_1$ in \eqref{Evolution_J} is approximated by
\begin{equation}\label{smoothed_convolution_error}
T_1 = \rho_\epsilon (a\ast j_\epsilon) - j_\epsilon (a\ast \rho_\epsilon) + R_{\epsilon},
\end{equation}
where
\begin{align}\label{lem_smoothed_convolution_error}
R_\epsilon & \lesssim \left|\rho_\epsilon N^{-1}\sum_{j=1}^{N}{(v_j - \overline{v})r_{\epsilon,j}}\right| +  N^{-1}\sum_{i=1}^{N}{\delta_\epsilon(x-x_i) |x-x_i|\left(N^{-1}\sum_{j=1}^{N}{|v_j-\overline{v}}|\right)} \nonumber\\
& \quad +  \left|(j_\epsilon - \overline{v}\rho_\epsilon)N^{-1}\sum_{j=1}^{N}{r_{\epsilon,j}}\right|
+ N^{-1}\sum_{i=1}^{N}{|v_i-\overline{v}|\delta_\epsilon(x-x_i)|x-x_i|}
\end{align}
where $\|r_{\epsilon,j}\|_{\infty}\lesssim \sqrt{\epsilon}$ for each $j=1,\dots, N$. Furthermore, we have
\begin{align}\label{bound_H_2_Residual}
\| R_\epsilon \|_{H^{-2}} \lesssim C(a)R_v e^{-a(x_M)t}\sqrt{\epsilon},
\end{align}
where $x_M$ is introduced in \eqref{particle_flocking_bound}.
\end{prop}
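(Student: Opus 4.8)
The plan is to obtain $R_\epsilon$ as the \emph{exact} discrepancy
$R_\epsilon := T_1 - \big(\rho_\epsilon (a\ast j_\epsilon) - j_\epsilon (a\ast \rho_\epsilon)\big)$
between the mollified version of $T_1$ from \eqref{Evolution_J} and the target convolution structure, and only then to bound it. Writing the mollified $T_1$ (i.e.\ $\delta\rightsquigarrow\delta_\epsilon$ in \eqref{Evolution_J}) as $N^{-1}\sum_i\delta_\epsilon(x-x_i)N^{-1}\sum_j a(x_i-x_j)v_j - N^{-1}\sum_i v_i\delta_\epsilon(x-x_i)N^{-1}\sum_j a(x_i-x_j)$, and recalling from \eqref{smoothed_densities} that $(a\ast j_\epsilon)(x)=N^{-1}\sum_j v_j (a\ast\delta_\epsilon)(x-x_j)$ and $(a\ast\rho_\epsilon)(x)=N^{-1}\sum_j (a\ast\delta_\epsilon)(x-x_j)$, I would isolate two independent error sources appearing inside the double sums: (i) the mollification of the kernel itself, encoded by $r_{\epsilon,j}(x):=a(x-x_j)-(a\ast\delta_\epsilon)(x-x_j)$; and (ii) the ``on-support'' replacement $a(x_i-x_j)\rightsquigarrow a(x-x_j)$ which is exact for the bare Dirac $\delta$ in \eqref{derive_convolutions_atomic} but only approximate for the spread-out $\delta_\epsilon$, encoded by $\Delta_{ij}(x):=a(x_i-x_j)-a(x-x_j)$.

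The crucial algebraic manoeuvre is to split every velocity as $v_j=(v_j-\overline{v})+\overline{v}$ before collecting terms. For the $\Delta_{ij}$-contributions this produces $N^{-1}\sum_i\delta_\epsilon(x-x_i)N^{-1}\sum_j (v_j-v_i)\Delta_{ij}$, in which $\overline{v}$ cancels identically through $v_j-v_i=(v_j-\overline{v})-(v_i-\overline{v})$; the two resulting pieces are precisely the second and fourth terms of \eqref{lem_smoothed_convolution_error}. For the $r_{\epsilon,j}$-contributions the cancellation is not automatic, and I would recombine the leftover $\overline{v}$ pieces into the aligned factor $j_\epsilon-\overline{v}\rho_\epsilon$, obtaining $\rho_\epsilon N^{-1}\sum_j (v_j-\overline{v})r_{\epsilon,j} - (j_\epsilon-\overline{v}\rho_\epsilon)N^{-1}\sum_j r_{\epsilon,j}$, i.e.\ the first and third terms of \eqref{lem_smoothed_convolution_error}. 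This establishes \eqref{smoothed_convolution_error} exactly and identifies $R_\epsilon$ with the four-term sum.

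With the decomposition in hand, the pointwise bound \eqref{lem_smoothed_convolution_error} follows from two elementary estimates: the Lipschitz bound $|\Delta_{ij}(x)|\leq\|a'\|_\infty|x-x_i|$, using the assumed bounded first derivative of $a$; and the mollification bound $\|r_{\epsilon,j}\|_\infty=\|a-a\ast\delta_\epsilon\|_\infty\lesssim\sqrt{\epsilon}$, which I would derive from $|a(x)-(a\ast\delta_\epsilon)(x)|\leq\|a'\|_\infty\int|y|\delta_\epsilon(y)\,dy$ together with the first-moment control of the von Mises kernel of variance $\epsilon^2$ (this in fact yields the sharper $\epsilon$, and $\sqrt{\epsilon}$ amply suffices). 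For the $H^{-2}$ bound \eqref{bound_H_2_Residual} I would estimate each of the four terms by duality against $\varphi\in H_0^2$ with $\|\varphi\|_{H^2}\leq 1$, using: the one-dimensional Sobolev embedding $\|\varphi\|_\infty\lesssim\|\varphi\|_{H^2}$; the unit mass $\int\delta_\epsilon=1$ and first moment $\int|y|\delta_\epsilon(y)\,dy\lesssim\epsilon$; and, decisively, the particle flocking bound \eqref{particle_flocking_bound}, which via Cauchy--Schwarz gives $N^{-1}\sum_j|v_j-\overline{v}|\leq R_v e^{-a(x_M)t}$. The two $r_{\epsilon,j}$-terms then contribute $\lesssim\sqrt{\epsilon}\,R_v e^{-a(x_M)t}$ (the $\sqrt{\epsilon}$ from $\|r_{\epsilon,j}\|_\infty$, the decay from the velocity sum), while the two $\Delta_{ij}$-terms contribute $\lesssim\epsilon\,R_v e^{-a(x_M)t}\lesssim\sqrt{\epsilon}\,R_v e^{-a(x_M)t}$, all constants being absorbed into $C(a)$.

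The genuine difficulty lies entirely in the bookkeeping of the first step. A naive bound on $T_1-\big(\rho_\epsilon (a\ast j_\epsilon)-j_\epsilon (a\ast\rho_\epsilon)\big)$ retains contributions proportional to $\overline{v}$ rather than to the fluctuations $v_j-\overline{v}$; such terms do \emph{not} decay in time and would destroy the factor $e^{-a(x_M)t}$ in \eqref{bound_H_2_Residual}. Ensuring that, after the velocity splitting, every surviving term carries a genuine velocity fluctuation $v_j-\overline{v}$ or $v_i-\overline{v}$ (or the aligned combination $j_\epsilon-\overline{v}\rho_\epsilon$) is exactly what ties the residual to the flocking rate, and is the one place where care is required; the remaining mollification and duality estimates are routine.
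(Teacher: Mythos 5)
Your proposal is correct and follows essentially the same route as the paper's proof: the same splitting of the kernel error into the Lipschitz part $a(\f{x}_i-\f{x}_j)-a(\f{x}-\f{x}_j)$ and the mollification part $r_{\epsilon,j}$, the same velocity decomposition $v_j-v_i=(v_j-\overline{v})-(v_i-\overline{v})$ producing exactly the four residual terms, and the same $H^{-2}$ duality argument combining the Sobolev embedding, the unit-mass and first-moment bounds for $\delta_\epsilon$, and the particle flocking estimate \eqref{particle_flocking_bound}. The only cosmetic difference is that you derive the bound $\|r_{\epsilon,j}\|_\infty\lesssim\sqrt{\epsilon}$ directly from the Lipschitz property of $a$ and the first moment of the von Mises kernel (obtaining in fact the sharper rate $\epsilon$), whereas the paper imports it from an external convolution-approximation lemma.
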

\begin{proof}
Upon replacing the Dirac deltas with the von Mises kernels $\delta_\epsilon$, adding and subtracting suitable quantities involving the limiting velocity $\overline{v}$, and performing the splitting $a(x_i(t)-x_j(t)) = a(x -x_j(t)) + [a(x_i(t)-x_j(t)) - a(x -x_j(t))]$, we get that $T_1$ in \eqref{derive_convolutions_atomic} is written as 
\begin{align*}
T_1 & = N^{-1}\sum_{i=1}^{N}{\delta_\epsilon(x-x_i(t)) N^{-1}\sum_{j=1}^{N}{a(x-x_j(t))(v_{j}(t) - \overline{v})}}\nonumber \\
& \quad\quad + N^{-1}\sum_{i=1}^{N}{\delta_\epsilon(x-x_i(t)) N^{-1}\sum_{j=1}^{N}{[a(x_i(t)-x_j(t)) - a(x-x_j(t))](v_{j}(t) - \overline{v})}}\nonumber \\
& \quad\quad - N^{-1}\sum_{i=1}^{N}{(v_{i}(t) - \overline{v})\delta_\epsilon(x-x_i(t)) N^{-1}\sum_{j=1}^{N}{a(x-x_j(t))}} \\
& \quad\quad - N^{-1}\sum_{i=1}^{N}{(v_{i}(t) - \overline{v})\delta_\epsilon(x-x_i(t)) N^{-1}\sum_{j=1}^{N}{[a(x_i(t)-x_j(t)) - a(x-x_j(t))]}} =: \sum_{i=1}^{4}{Z_i}.
\end{align*}
Using the convolution approximation as in \cite[Lemma 3.6]{cornalba2020weakly}, 
we deduce that, for some functions $r_{\epsilon,j}$ such that $\|r_{\epsilon,j}\|_\infty \lesssim \sqrt{\epsilon}$, we obtain
\begin{align*}
Z_1 + Z_3 & = N^{-1}\sum_{i=1}^{N}{\delta_\epsilon(x-x_i(t)) N^{-1}\sum_{j=1}^{N}{(v_{j}(t) - \overline{v})\left[\int_{\mathbb{T}}{a(x-y)\delta_\epsilon(y-x_j(t))d y} + r_{\epsilon,j}\right]}} \\
& \quad - N^{-1}\sum_{i=1}^{N}{(v_{i}(t) - \overline{v})\delta_\epsilon(x-x_i(t)) N^{-1}\sum_{j=1}^{N}{\left[\int_{\mathbb{T}}{a(x-y)\delta_\epsilon(y-x_j(t))d y} + r_{\epsilon,j}\right]}}\\
& = \rho_\epsilon (a\ast [j_\epsilon - \overline{v}\rho_\epsilon]) - (j_\epsilon -\overline{v}\rho_\epsilon)a\ast \rho_\epsilon \\
& \quad \quad + \rho_\epsilon N^{-1}\sum_{j=1}^{N}{(v_{j}(t) - \overline{v})r_{\epsilon,j}} - (j_\epsilon - \overline{v}\rho_\epsilon)N^{-1}\sum_{j=1}^{N}{r_{\epsilon,j}}\\
& = \rho_\epsilon (a\ast j_\epsilon ) - j_\epsilon a\ast \rho_\epsilon + \rho_\epsilon N^{-1}\sum_{j=1}^{N}{(v_{j}(t) - \overline{v})r_{\epsilon,j}} + (j_\epsilon - \overline{v}\rho_\epsilon)N^{-1}\sum_{j=1}^{N}{r_{\epsilon,j}}.
\end{align*}
Furthermore, using a simple Taylor expansion for $a$, we get 
\begin{align*}
|Z_2 + Z_4| & \lesssim \|a'\|_\infty\left\{ N^{-1}\sum_{i=1}^{N}{\delta_\epsilon(x-x_i) |x-x_i|\left[N^{-1}\sum_{j=1}^{N}{|v_j-\overline{v}}|\right]} \right.\\
& \quad \quad \left. + N^{-1}\sum_{i=1}^{N}{|v_i-\overline{v}|\delta_\epsilon(x-x_i)|x-x_i|}\right\},
\end{align*}
and \eqref{lem_smoothed_convolution_error} is proven.
Finally, \eqref{bound_H_2_Residual} follows from using the characterisation of the $H^{-2}$-norm $\|R_{\epsilon}\|_{H^{-2}} = \sup_{\varphi \in H^2(\mathbb{T})\colon \|\varphi\|_{H^2(\mathbb{T})}=1}{|\int_{\mathbb{T}}{R_\epsilon \varphi|}}$, the one-dimensional embedding $H^{1}\subset L^\infty$, inequality \eqref{particle_flocking_bound}, the inequality $\sum_{i=1}^{N}{|b_i|} \leq N^{1/2}(\sum_{i=1}^{N}{|b_i|^2})^{1/2}$, the bound $\|r_{\epsilon,j}\|_{\infty}\lesssim \sqrt{\epsilon}$ and Gaussian moment bounds which promptly give us the estimate $\int_{\mathbb{T}}{\delta_{\epsilon}(x-x_i(t))|x-x_i(t)|d x} \lesssim \epsilon$. 
\end{proof}

\begin{proof}[Proof of Theorem 
\ref{thm_error_PDE_particles_H_minus_2}]

\emph{Step 1: functional set-up}.
In $d=1$, we have 
$    
\partial_t X_{PDE} = A X_{PDE} + F(X_{PDE}),
$
where 
\begin{align*}
    AX_{PDE} = A \begin{pmatrix}
        \rho \\
        j
    \end{pmatrix} 
    = \begin{pmatrix}
        - \nabla \cdot j \\
        - \overline{v}^2 \nabla \rho
    \end{pmatrix} , \qquad 
    F(X_{PDE}) = \begin{pmatrix}
        0 \\
        \rho (a*j) - j (a * \rho)
    \end{pmatrix} .
\end{align*}

Furthermore, $A$ generates a $C_0$-semigroup $(S_t)_{t \geq 0}$ of contractions in $\mathbf{H}^{-2}$ (see \cite[Lemma 4.2]{Cornalba2019}, and we can write $X_{PDE}$, $X_{CS}$ as 
\begin{align}
    X_{PDE}(t) &= X_{PDE}(0) + \int_0^t S_{t-s} F(X_{PDE}(s)) ds \label{PDE}\\
    X_{CS}(t) &= X_{CS}(0) + \int_0^t S_{t-s} F(X_{CS}(s)) ds + \int_0^t S_{t-s} R(s) ds \label{CS},
\end{align}
where $R(s) := R_{closing}(s) + R_{\epsilon}(s)$, where $R_{\epsilon}(s)$ is bounded as in Proposition \ref{prop:Mollifying_err}, and
\begin{align*}
    R_{closing}(s) & := \left( 0 \,; -N^{-1} \sum_{i=1}^N (v_i^2(s) - \overline{v}^2) \delta_\epsilon' (x-x_i(s))  \right).
\end{align*}

\emph{Step 2: bounding difference between PDE and CS with path-wise Gr\"onwall estimate}.
Subtracting \eqref{PDE} and \eqref{CS}, taking the $\mathbf{H}^{-2}$ norm and using the contractivity of $S(t)$, we get 

\begin{align}
    \norm{X_{PDE}(t) - X_{CS}(t)}_{\mathbf{H}^{-2}} \lesssim & \norm{X_{PDE}(0) - X_{CS}(0)}_{\mathbf{H}^{-2}} \nonumber\\
    & \quad + \int_0^t   \norm{F(X_{PDE}(s)) - F(X_{CS}(s))}_{\mathbf{H}^{-2}} ds + \int_0^t \norm{R(s)}_{\mathbf{H}^{-2}} ds .
\end{align}
The first component of $F(X_{PDE}(s)) - F(X_{CS}(s))$ is zero, while the second is 
\begin{align}\label{expand_F_second_component}
    & \rho (a*j) - \rho_\epsilon (a*j_\epsilon) - j (a * \rho) + j_{\epsilon} (a * \rho_{\epsilon}) \nonumber\\
    & = \rho a*(j - \overline{v} \rho) - (j - \overline{v} \rho) a * \rho - ( \rho_{\epsilon} a*(j_{\epsilon} - \overline{v} \rho_{\epsilon})- (j_{\epsilon} - \overline{v} \rho_{\epsilon})a * \rho_{\epsilon}  )\nonumber \\
    & = (\rho - \rho_{\epsilon}
    ) a*(j - \overline{v} \rho) + \rho_{\epsilon} a * \left[ (j - \overline{v} \rho) - (j_{\epsilon} - \overline{v} \rho_{\epsilon}) \right] \nonumber\\
    & - \left[ (j - \overline{v} \rho) a* (\rho - \rho_{\epsilon}) + \left[ (j - \overline{v} \rho) - (j_{\epsilon} - \overline{v} \rho_{\epsilon})   \right]  a* \rho_{\epsilon} \right] .
\end{align} 
Using the following inequality valid for $d=1$ \cite[Theorem 8.1]{Behzadan2021}
\begin{align}
    \label{eq:A}
    \norm{uv}_{H^{-2}} \lesssim \norm{u}_{H^{-2}} \norm{v}_{H^{2}}, \quad \forall u\in H^{-2}, v\in H^2,
\end{align}
as well as \eqref{bound_fourier_a}, we deduce that, for general $u,v\in H^{-2}$, we have
    \begin{align}\label{convolutional_inequality_H_1}
        \norm{u( a*v)}_{H^{-2}}^2 & \stackrel{\eqref{eq:A} }{\lesssim}\norm{u}_{H^{-2}}^2 \norm{a*v}_{H^{2}}^2 \lesssim \norm{u}_{H^{-2}}^2 \left[ \sum_{r=0}^{2}\norm{a^{(r)}*v}_{L^2}^2  \right] \nonumber\\
        & \lesssim \norm{u}_{H^{-2}}^2 \left[ \sum_{r=0}^{2}\sum_{\xi}| \widehat{a^{(r)}*v}(\xi)|^2\right] = \norm{u}_{H^{-2}}^2 \left[ \sum_{r=0}^{2}\sum_{\xi}| \widehat{a^{(r)}}(\xi)\widehat{v}(\xi)|^2 \right] \nonumber\\
        & \stackrel{\eqref{bound_fourier_a}}{\lesssim} \norm{u}_{H^{-2}}^2 \left[\sum_{\xi} |\hat{v}(\xi)|^2 (1+ |\xi|^2)^{-2} \right] =  \norm{u}_{H^{-2}}^2 \norm{v}_{H^{-2}}^2 
    \end{align}
If applied to \eqref{expand_F_second_component}, the inequality \eqref{convolutional_inequality_H_1}
grants
\begin{align*}
    \norm{F(X_{PDE}) - F(X_{CS})}_{\mathbf{H}^{-2}} \lesssim & \,C(a)  [ \norm{\rho - \rho_\epsilon}_{H^{-2}} \cdot \norm{j - \overline{v} \rho}_{H^{-2}} \\    
    & + \norm{\rho_\epsilon}_{H^{-2}} \cdot \norm{ (j - \overline{v} \rho) - (j_\epsilon - \overline{v} \rho_\epsilon)}_{H^{-2}} ] . 
\end{align*}
This entails that

\begin{align*}
    \norm{X_{PDE}(t) - X_{CS}(t)}_{\mathbf{H}^{-2}} \leq & \norm{X_{PDE}(0) - X_{CS}(0)}_{\mathbf{H}^{-2}} \\
    & \quad + \int_0^t A(s)  \norm{X_{PDE}(s) - X_{CS}(s)}_{\mathbf{H}^{-2}} ds + \int_0^t B(s) ds ,
\end{align*}
where we have set 
\begin{align*}
    A(s) & := C(a) \cdot \norm{j(s) - \overline{v} \rho(s)}_{H^{-2}} \\
    B(s) & :=  \norm{R(s)}_{H^{-2}} + \norm{\rho_\epsilon}_{H^{-2}} C(a) \norm{ (j(s) - \overline{v} \rho(s)) - (j_\epsilon(s) - \overline{v} \rho_\epsilon(s))}_{H^{-2}} \\
    & \quad \lesssim \norm{ R_{closing}(s)}_{H^{-2}} + \norm{R_{\epsilon}(s)}_{H^{-2}} +\norm{\rho_\epsilon}_{H^{-2}} C(a) \norm{ (j(s) - \overline{v} \rho(s)) - (j_\epsilon(s) - \overline{v} \rho_\epsilon(s))}_{H^{-2}}.
\end{align*}



Then, using Gr\"onwall's lemma gives 
\begin{align}
    \label{eq:Split_bound}
    \norm{X_{PDE}(t) - X_{CS}(t)}_{\mathbf{H}^{-2}} & \lesssim \norm{X_{PDE}(0) - X_{CS}(0)}_{\mathbf{H}^{-2}} \exp \left\{ \int_0^t A(s) ds  \right\} \nonumber\\
    & \quad + \int_0^t B(s) \exp \left\{ \int_s^t A(r) dr  \right\} ds .
\end{align}

\emph{Step 3: Concluding the argument.} Using Proposition \ref{local_result}, we immediately get
\begin{align}\label{bound_A_s}
A(s) \leq C(a) \|j_0 - \overline{v} \rho_0\| e^{-C_as}
\end{align}
Bounding $B(s)$ relies on the following: i) the bound $\|R_{closing}\|_{H^{-2}} \lesssim R_v e^{-a(x_M)t}$, which follows from the characterisation of the $H^{-2}$-norm 
$$
\|R_{closing}\|_{H^{-2}} = \sup_{\varphi \in H^2(\mathbb{T})\colon \|\varphi\|_{H^2(\mathbb{T})}=1}{|\int_{\mathbb{T}}{R_{closing} \varphi|}},
$$
as well as the one-dimensional embedding $H^{2}\subset W^{1,\infty}$; ii) the bound $\|R_\epsilon\|_{H^{-2}}\lesssim C(a)R_v e^{-a(x_M)t}\sqrt{\epsilon}$ from Proposition \ref{prop:Mollifying_err}; iii) the simple bound $\|\rho_\epsilon\|_{H^{-2}} \lesssim 1$, which follows from the one-dimensional embedding $H^{1}\subset L^{\infty}$
iv) the bound \eqref{particle_flocking_bound} and Proposition \ref{local_result}, which allows us to deal with the term $\norm{ (j(s) - \overline{v} \rho(s)) - (j_\epsilon(s) - \overline{v} \rho_\epsilon(s))}_{H^{-2}}$. Altogether, we obtain
\begin{align}\label{bound_B_s} 
B(s) \leq  C(a)R_v e^{-a(x_M)s}(1+\sqrt{\epsilon}) + C(a) \left\{ \|j_0 - \overline{v} \rho_0\| e^{-C_as} + R_v e^{-a(x_M)s} \right\}.
\end{align}
The proof is concluded by plugging \eqref{bound_A_s} and \eqref{bound_B_s} in \eqref{eq:Split_bound}.
\end{proof}


\section{Numerical results}
\label{sec:Numerical}

In this section we present some numerical results for the prototypical interaction potential \eqref{eq:NonConstComm}, to complement the analytical results presented in the previous sections. Unless otherwise stated we consider $d=1$ and parameters $r = 1/2$, to guarantee unconditional flocking, and $\lambda = 50$, so that flocking occurs relatively quickly. 

\subsection{Accuracy in the $L^2$-norm}
\label{subsec:NumPDEvsABM}

To extend the results of Section \ref{sec:ReductionAccuracy}, on the error between CS particle and PDE systems in the $\mathbf{H}^{-2}$-norm, we present numerical results for the error in the $L^2$-norm in this subsection. Let us recall that the upper bound we derived for the error is dependent on a variety of factors, namely: the spread of the particles initial velocities; the standard deviation of the regularised measures; and the flocking rate of both particle and PDE models. In the $L^2$-norm the error bound follows a similar scaling, however, with an additional contribution from $\partial_x f$ from the term
\begin{align}
    \label{eq:R_L2}
    \mathbb{E} \left[ \norm{R(s)_{closing}}_{L^{2}}^2 \right] & = \mathbb{E} \left[ \norm{ N^{-1} \sum_{i=1}^N (v_i^2(s) - \overline{v}^2) \delta_\epsilon'(x-x_i)}_{L^{2}}^2 \right] ,
\end{align}
where we need to offload the derivative on $\delta_\epsilon$ to $f$.

To investigate the impact of $\partial_x f$, as well as the other factors, on an upper bound for \eqref{eq:R_L2}, we present numerical results, where \eqref{eq:R_L2} is evaluated at one point in time, $t=0$. We first fix $\epsilon =0.1$ and vary the initial velocity spread, by changing the length of the uniform distribution from which the initial velocities are drawn. Furthermore, we also investigate the effects of the initial position distribution becoming more non-uniform, i.e., increasing $\partial_x f$. This is done by randomly generating the positions of the particles from von-Mises distributions, with probability density function
\begin{align*}
    VM(x | k) = \frac{e^{k \cos{(x)}}}{2 \pi I_0(k)} ,
\end{align*}
where $I_0(\cdot)$ is the modified Bessel function of the first kind of order $0$, with varying $k$. As $k$ is increased, the distributions become more localised and less uniform ($k = 0$ corresponds to the uniform distribution). The number of particles is fixed at $N=10^{3}$ and $10^{3}$ realisations are used to achieve reliable statistics. The results for ten different velocity spreads and $k =0 , 0.5, 1, 1.5$ and $2$ are presented in Fig.~\ref{fig:C_dep}(a). From these results, we can clearly see that the profile of the particle position's distribution, has a negligible effect on \eqref{eq:R_L2}, when compared to the impact of the initial velocity spread.

To investigate the effect of $\epsilon$ we fix the velocity spread by drawing initial velocities from a uniform distribution centred at zero with a width of ten and vary $\epsilon$ and $k$. These results are shown in Fig.~\ref{fig:C_dep}(b). For $k = 0$ the closing error clearly follows a power law decay in epsilon with exponent $-3$. To illustrate this, a function proportional to $\epsilon^{-3}$ is drawn in the plot as a solid black line. As the position distribution becomes more localised, the closing error however clearly no longer follows a power law decay. For $k >0$ and larger $\epsilon$ the error is instead dominated by the contribution from the profile of the spatial distribution. Evidently, the higher $k$ and therefore $\partial_x f$, the worse an upper bound for \eqref{eq:R_L2} would be.

\begin{figure}[h]
    \includegraphics[width=0.99\linewidth]{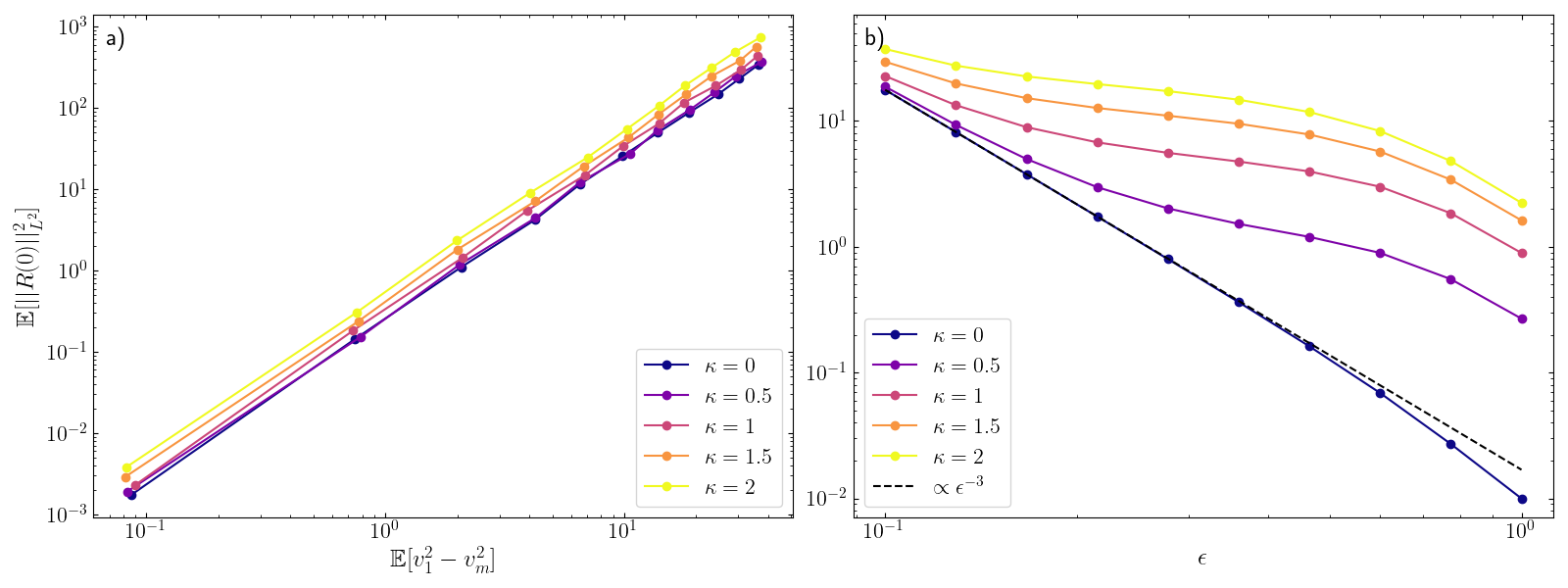}
\caption{The term $\mathbb{E} \left[ \norm{R(s)}_{L^{2}}^2 \right]$, \eqref{eq:R_L2} evaluated at $t=0$ for a variety of parameter settings. a) The dependence of \eqref{eq:R_L2} on the initial velocity spread of the particle system for fixed $\epsilon = 0.1$. The velocity spread is adjusted by changing the width of the uniform distribution from which the initial velocities are drawn. Furthermore, the effect of the shape of the initial position distribution is also shown by varying $\kappa$ of the von-Mises distribution from which initial particles positions are drawn. b) Similar to a) but showing the dependence fot fixed velocity spread, and varying $\epsilon$.} \label{fig:C_dep}
\end{figure}

Let us now investigate how the errors in the $L^2$-norm
\begin{align}
    \label{eq:L2_err}
    \mathbb{E} \left[  \norm{X_{PDE}(t) - X_{CS}(t)}_{L^{2}}\right] ,
\end{align}
compare for the reduced PDE with and without the inclusion of the weight-term, discussed in Section \ref{subsec:Weights}.

Similar to the results in Fig.~\ref{fig:C_dep} the initial velocity spread and $\epsilon$ are varied and initial particle positions are drawn from a uniform distribution. The accuracy of the PDE at $t=2$ (solid black line) is shown in Fig.~\ref{fig:PDE_closing_comp}(a) and (b) for ten different velocity spreads and ten different $\epsilon$'s, respectively. As expected, the further in the flocking regime the particles are (this corresponds to a small velocity spread) the more accurate the PDE is. The accuracy of the PDE does decrease with $\epsilon$, a drawback when a higher resolution of the density profile of the particles is desired.

As discussed in Section \ref{subsec:Weights} choosing an appropriate weight term $w(t)$ to compensate for the closing approximation, by accounting for the deviation from the flocking regime, may give a better approximation to the particle dynamics. We introduced an appropriate choice of weight \ref{exp_weight}, which only depends on the initial closing approximation $w_0(x)$, current time of the system, and rate of flocking $C_a$. For the interaction potential \eqref{eq:NonConstComm}, we set $C_a = \lambda/2$.

In Fig.~\ref{fig:PDE_closing_comp} we plot the accuracy of the PDE with the inclusion of the weight term using dashed, blue lines (all other system parameters as well as the numerical implementation are the same as for the previously considered case of $w=1$). We see that the weight term causes a significant increase in the accuracy (of a factor $\gtrsim 5$) for when the reduced model would typically perform poorly, i.e. high velocity spread or low $\epsilon$. While we do not yet understand the exact effect of the weight term and how best to choose it, these results indicate that it is a modification with great potential to increase the accuracy of the reduced PDE.

\begin{figure}[h]
    \includegraphics[width=0.99\linewidth]{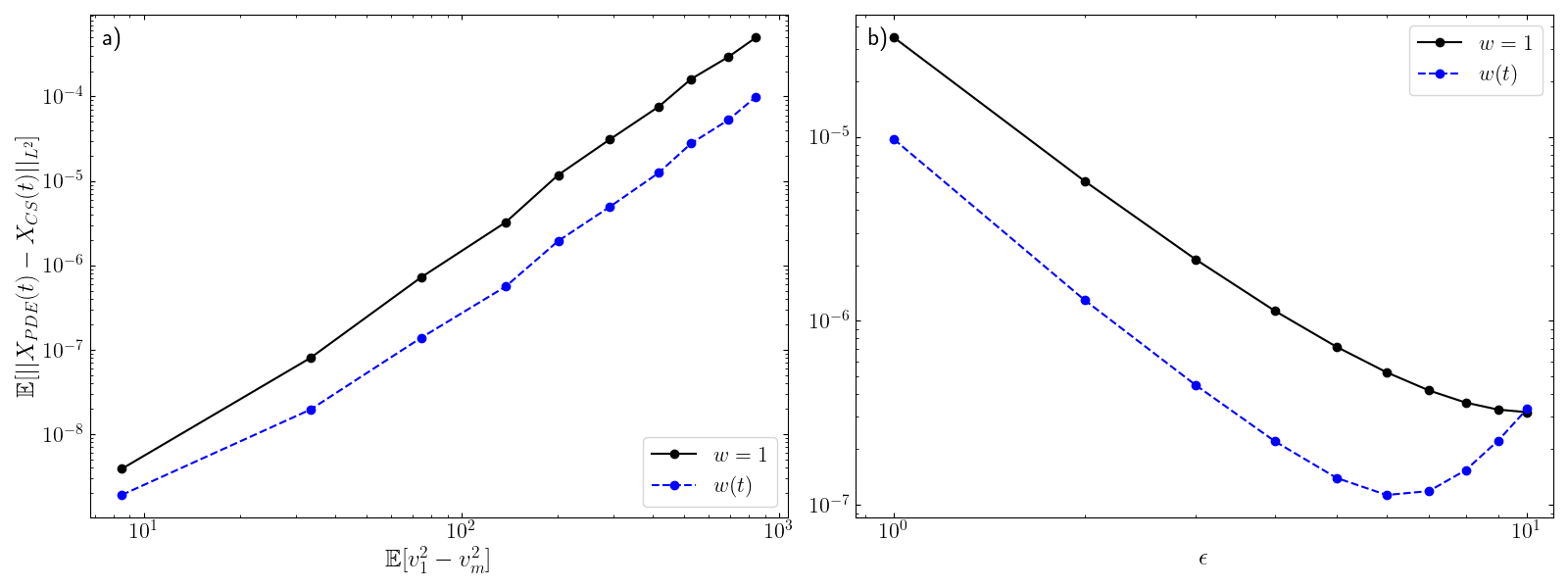}
    \caption{The discrepancy between the particle system and reduced PDE \eqref{eq:L2_err} in one dimension, evaluated at $t=2$. a) The error for fixed $\epsilon$ and varying initial velocity spread with initial positions drawn from a uniform distribution centred at zero with width 40. The solid, black line is for the reduced PDE model without the inclusion of the weight \eqref{reduced_CS_rho}-\eqref{reduced_CS_j} and the dashed, blue line for the PDE \eqref{reduced_CS_rho_w}-\eqref{reduced_CS_j_w} with the inclusion of the weight \eqref{exp_weight}. b) Similar to a), but for fixed initial velocity spread and varying $\epsilon$.}
    \label{fig:PDE_closing_comp}
\end{figure}

\subsection{Comparison with the hydrodynamic model \eqref{eq:Hydrodynamic1}-\eqref{eq:Hydrodynamic2}} \label{subsec:HydroCompare}

In this subsection we compare numerical results of our reduced, inertial model \eqref{reduced_CS_rho}-\eqref{reduced_CS_j} with the previously derived hydrodynamic description \eqref{eq:Hydrodynamic1}-\eqref{eq:Hydrodynamic2}, in one-dimension. The empirical momentum density $j$ and the momentum $\rho u$ are analogous in the two models, which can be easily seen by substituting the empirical density for $x_i$ and $v_i$ into the definition of $\rho u$. The two models will therefore always agree at the initial time if we choose the same $\rho_0$ and $u_0 = j_0/\rho_0$. For initial data, that is in a flock, i.e., $j_0 = \overline{v} \rho_0$, the two reduced PDE models agree for all time, as they both transform into the wave equation. 

In Fig.~\ref{fig:Hydro_comp} a) we present the $L^2$ difference between the two reduced models as a function of time for five different initial data which are progressively further from flocking, i.e., $\norm{j_0 -\overline{v} \rho_0}_{L^2}$ is higher (corresponding to brighter colours in the figure). For all the tested initial data, there is an early increase in the discrepancy between the two models, which then smoothens out for later times. From the work done in Section \ref{sec:PDEflock} as well as the references given in Section \ref{subsec:CSmodel} we know that both of the reduced models will flock. This means that eventually the difference between their results will be constant, which is the flattening effect we observe. We also see that when the initial data is closer to flocking, the models agree more, as they are both closer to behaving like the wave equation. 

Instead of comparing the models to each other, in Fig.~\ref{fig:Hydro_comp} b) we compare both models with the results from the particle system. This is done similarly to the results presented in Section \ref{subsec:NumPDEvsABM}, by taking $10^3$ realisations of the CS model for varying initial velocity spreads and then computing \eqref{eq:L2_err} at $t=5$ for both reduced PDE models. From these results, we can see that our model reproduces the results of the particle system with the same accuracy as the discretization of the hydrodynamic description. While a more detailed, analytical, comparison of these two models is left for future work, the results presented in Fig.~\ref{fig:Hydro_comp} a) and b) demonstrate that our reduced model is not only different from the hydrodynamic description, but also reproduces the dynamics of the CS model as accurately.

\begin{figure}[h]
    \includegraphics[width=0.99\linewidth]{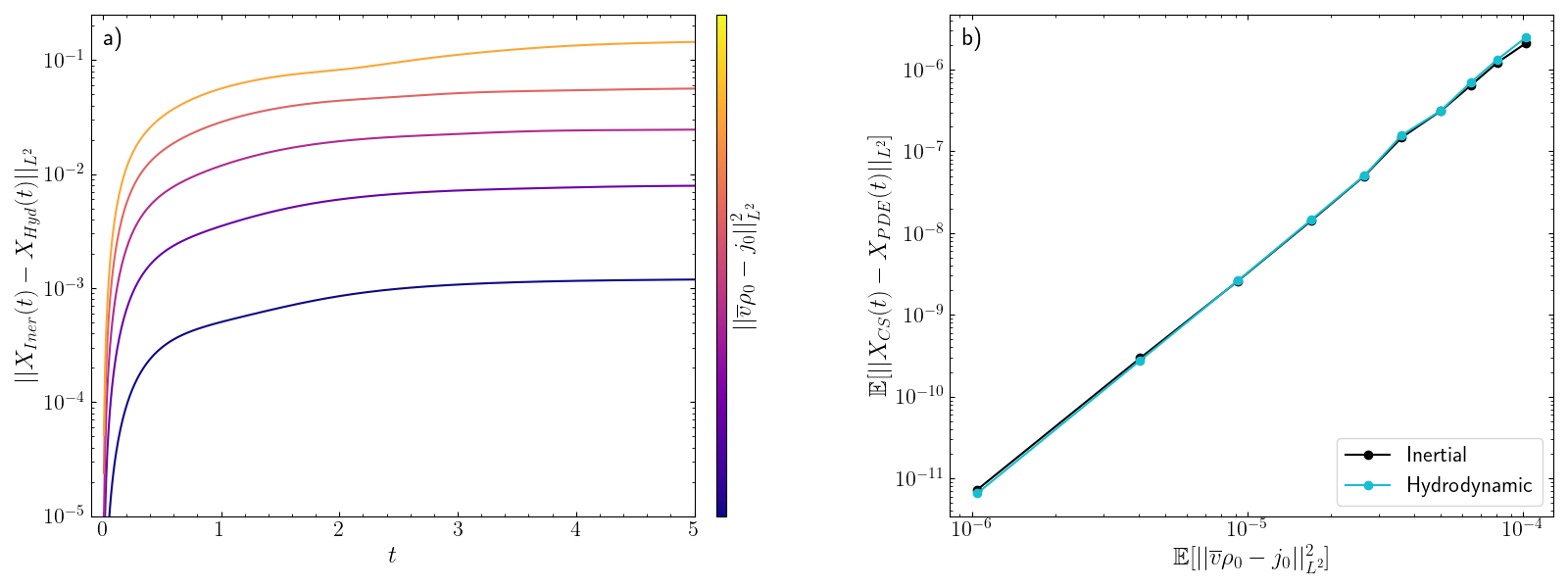}
    \caption{a) The difference between the reduced model \eqref{reduced_CS_rho}-\eqref{reduced_CS_j} and the hydrodynamic model \eqref{eq:Hydrodynamic1}-\eqref{eq:Hydrodynamic2} measured in the $L^2$ norm, as a function of time. Five different initial data $\rho_0$, $j_0$ are taken. The initial data with higher $\norm{j_0 -\overline{v} \rho_0}_{L^2}$ correspond to more brightly coloured lines. b) The $L^2$ error \eqref{eq:L2_err} between the CS model and our reduced PDE model, as well as the error of the hydrodynamic model.}
    \label{fig:Hydro_comp}
\end{figure}

\subsection{Computational effort}
\label{subsec:CompEffort}

The primary motivation for using the reduced inertial PDE model, is that it reduces the state space of the MFL, and, unlike the particle system, the computational effort for simulating the PDE does not increase with the total number of particles, $N$. This is because the reduced model is independent of $N$, while for the Cucker-Smalle model increasing $N$ means that more pairwise distances need to be computed and more ODEs for the positions and velocities of the particles solved. We demonstrate that this is the case by varying $N$ for both models and measuring the time it takes to simulate one time step $\Delta t$. These results are presented in Fig.~\ref{fig:CPU_time} for both the $1$d and $2$d systems on a log-log scale. As expected, the time needed to simulate the PDE is independent of $N$, while the computational effort required to compute the particle system increases super-linearly with $N$. For both the $1$D and $2$D case the reduced PDE model already takes less computational effort for $N\approx 600$ particles.

\begin{figure}[h]
    \includegraphics[width=0.99\linewidth]{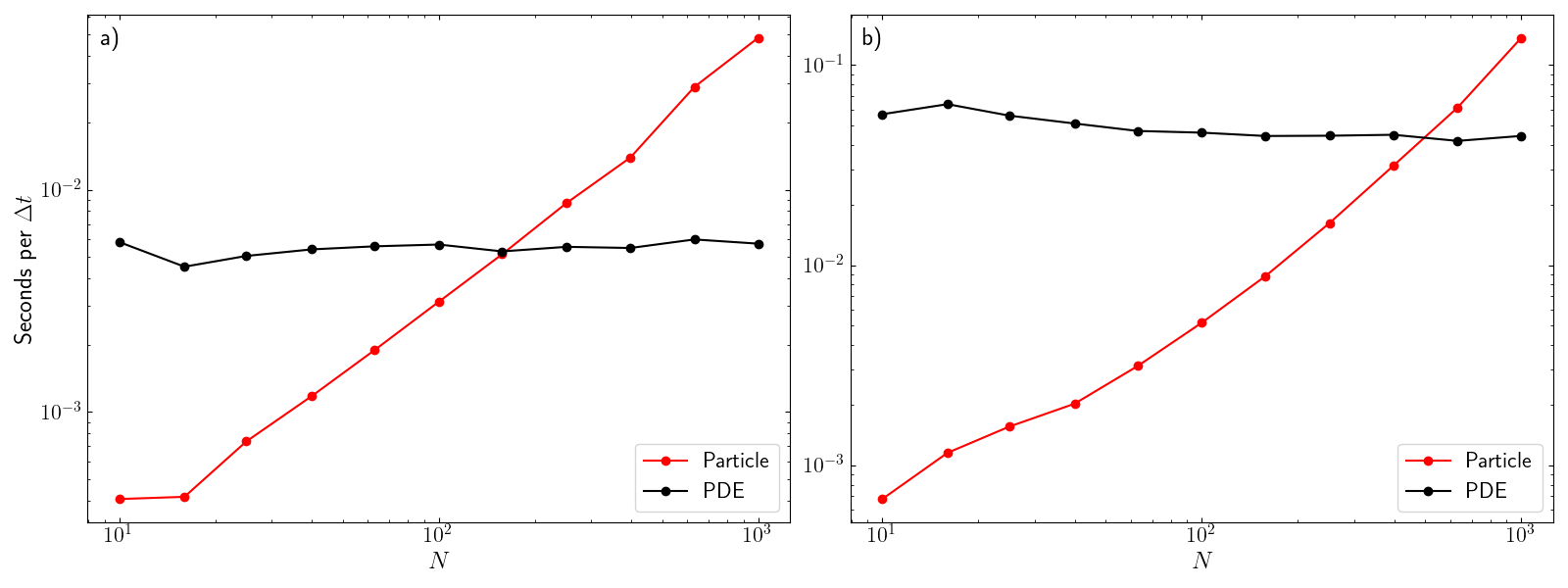}
    \caption{a) The mean CPU time required (in seconds) to compute one time step of the Cucker-Smale model and reduced PDE in dimension $d=1$ for various particle numbers $N$. For each point, the positions and velocities of the particles are randomly generated and the mean computed over $10^3$ of such realisations. b) Similar to a), but for dimension $d=2$.}
    \label{fig:CPU_time}
\end{figure}

\subsection{Stochastic PDE}
\label{subsec:SPDE}

In Section \ref{subsubsec:SPDEderivation} we introduced a stochastic modification of the deterministic Cucker-Smale model \ref{eq:stoc_det_v} and derived a reduced stochastic PDE model \ref{eq:reduced_SCS_j}. To demonstrate that the reduced stochastic model replicates the statistics of the stochastic particle system, we compare the computed mean and variance of $\rho$ and $j$ at a final time $t=0.5$. We take deterministic initial data (in the flocking regime) and perform a total of $10^{3}$ realisations to achieve accurate statistics. These results are presented in Fig.~\ref{fig:Simple_noise}, from which we can see that similar to the deterministic case, the reduced model is able to reproduce the results of the original system.

\begin{figure}[h]
    \includegraphics[width=0.99\linewidth]{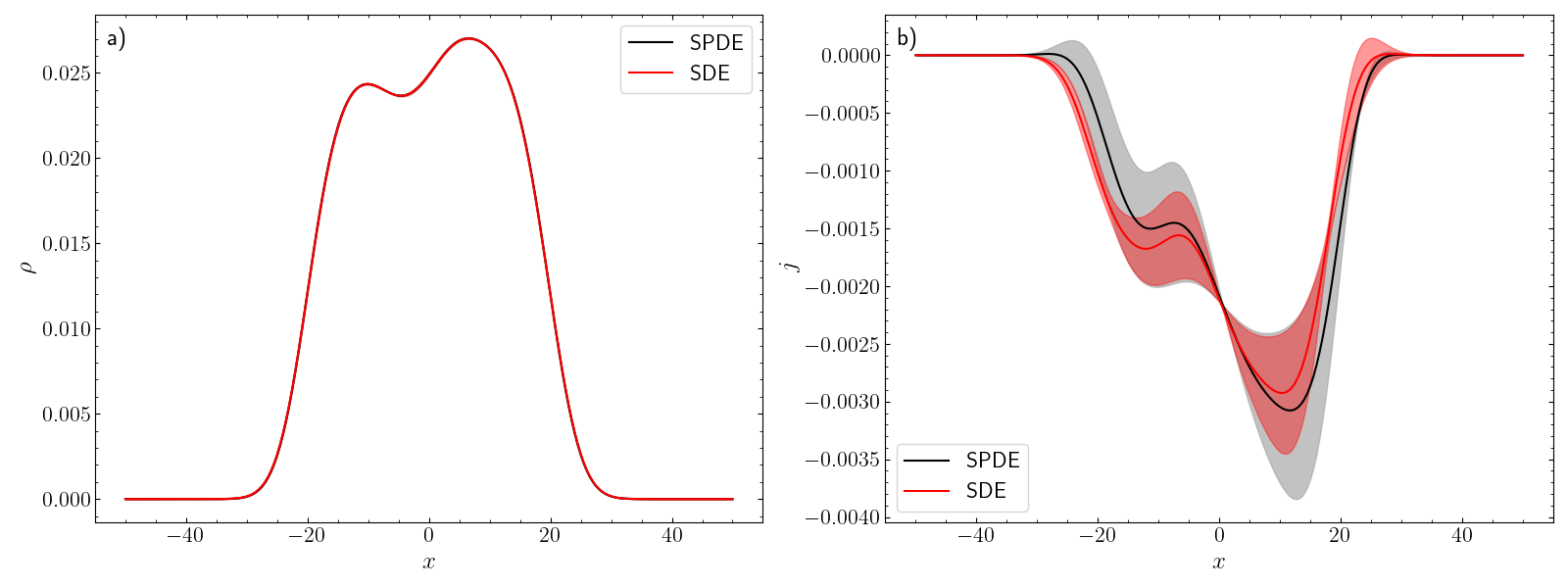}
    \caption{Results obtained for the stochastic Cucker-Smale model \eqref{eq:stoc_det_x}-\eqref{eq:stoc_det_v} and SPDE \eqref{reduced_SCS_rho}-\eqref{eq:reduced_SCS_j} at time $t=0.5$. a) the mean and variance of the empirical density $\rho$ of the particle system (red line and shaded light-red region, respectively) and reduced model (black line and shaded grey region, respectively). A total of $10^3$ realisation are computed to obtain the mean and variance. b) The same as a), but for the empirical momentum density $j$. Note that because the standard deviation for $\rho$ is so low it is not visible in the plot.}
\label{fig:Simple_noise}
\end{figure}

\section{Conclusion}\label{sec:Conclusion}

In this work we have presented and analysed a reduced, inertial PDE model which is a computationally cheap alternative to the Cucker-Smale particle model.
The reduced model is derived by defining the empirical particle density and momentum density, which are evaluated only in time and the position variable, not the velocity variable. To close our reduced model we rely on the approximation that the velocities of all particles are close to alignment, i.e., the particles are in the flocking regime.  

In our first main result (Theorem \ref{thm:all_PDE_flocking_results}) we prove PDE flocking for our inertial reduced PDE model (in all dimensions, and with a high-density regularisation for $d>1$). Furthermore, in the case $d=1$, we also introduce a modification of the inertial reduced PDE model, by including a weight term which corrects for the closing approximation used, which numerical results indicate drastically decreases the error, measured with the $L^2$-norm, with no extra computational cost. The modified model is shown to preserve the well-posedness and flocking property of the inertial model, but a quantitative bound for the accuracy with which it reproduces the particle dynamics is left for future work. In our second result (Theorem \ref{thm_error_PDE_particles_H_minus_2}), valid in dimension $d=1$, we quantify the error between the particle system and reduced PDE in the space $\mathbf{H}^{-2}$, deriving an upper bound that depends on the initial spread of particles, the flocking rate of the PDE and particle systems, and the smoothing of the initial empirical densities. 

Our model is an alternative to the well known hydrodynamic description of the CS model, which also is not evaluated in the velocity variable and relies on an appropriate closing approximation. Using numerical results we have demonstrated that the inertial model is: i) different from the hydrodynamic model, and ii) as accurate in replicating the particle system. An analytical comparison between these two models is a topic for future work. The key difference between the inertial, reduced model and hydrodynamic model, is that our model is derived from only the particle system, while the hydrodynamic model is obtained from the mean-field limit. We consider our alternative derivation of a reduced model bypassing the mean-field limit as advantageous, since it allows for a direct connection to the underlying particle system, and for the analysis of fluctuations in the velocity field. This work lays the foundations for the quantitative analysis of fluctuations, which we will investigate in subsequent works. \\

\enlargethispage{20pt}


{\bfseries Funding.} This work has been partially funded by the Deutsche Forschungsgemeinschaft (DFG) under Germany’s Excellence Strategy through grant EXC-2046 The Berlin Mathematics Research Center MATH+ (project no. 390685689). ADj gratefully acknowledges funding by Daimler and Benz Foundation as part of the scholarship program for junior professors and postdoctoral researchers. 


\vskip2pc

\bibliographystyle{abbrv}
\bibliography{references}

\section{Appendix}
\label{sec:Appendix}

\subsection {Full details for the proof of Proposition \ref{local_result}}\label{Appendix:local_result}
Here we prove in detail the steps I), II), III) highlighted in the \emph{Sketch of proof of Proposition \ref{local_result}}. Without loss of generality, we consider the case of $\overline{v}>0$ (see Remark 
\ref{shifting_vs}).

\emph{Proving I)}. The local existence of a $L^2 \times L^2$-valued solution is a consequence of the mild solution theory for the $C_0$-semigroup generated by the operator $(\rho,j)\mapsto (-\nabla \cdot j,-\overline{v}^2\nabla \rho)$, in $H^{1}\times H^{1}\subset L^2\times L^2$ (see \cite{cornalba2021well} for more extensive details), as well as of the local-Lipschitz nature of the nonlinear component $\rho(a\ast j) - j(a\ast \rho)$.

\emph{Proving II)}.
The flocking property is shown analogously to the proof of Theorem \ref{thm_flocking_alignment}, with the extra term 
\begin{equation}\label{bound_eps_bit}
    \theta \langle \rho (g\ast (j-\overline{v}\rho)) , (j-\overline{v}\rho)) \rangle - \theta \langle ( (j-\overline{v}\rho)g*\rho, j-\overline{v}\rho) \rangle 
     \leq 2\theta\|\rho\| \|g\|\|j-\overline{v}\rho\|^2
\end{equation}
being added to the differential of $\frac{1}{2}\partial_t\| j - \overline{v}\rho \|^2$. Point II) then follows from combining \eqref{stopping_time}, \eqref{bound_eps_bit} and \eqref{diff_square} and get overall
\begin{align}
    \label{flocking_alignment_nonlinear}
    \|j(t) - \overline{v} \rho(t)\|^2 \leq \exp\{-C_a t\}\|\rho_0-\overline{v}j_0\|^2.
\end{align}

\emph{Proving III)}. Taking the time derivative of the quantity $\eta(t) := \overline{v}^2\|\rho(t)\|^2 + \|j(t)\|^2$ and using integration by parts, we get
\begin{align}\label{differential_eta}
\frac{1}{2}\partial_t\eta(t) = \langle \rho(a\ast j) - j(a\ast \rho),j \rangle .
\end{align}
Adding and subtracting $\overline{v}\rho$ in suitable $j$-terms and using the fact that $\int{(j-\overline{v}\rho)}=0$, we obtain
\begin{align*}
\frac{1}{2}\partial_t\eta(t) & = \langle \rho(a\ast (j-\overline{v}\rho)) - (j-\overline{v}\rho)(a\ast \rho),j \rangle \\
& = \langle  \rho (\theta g)\ast (j-\overline{v}\rho), j \rangle - C\|j-\overline{v}\rho\|^2 - \langle j-\overline{v}\rho, C_a\overline{v}\rho\rangle \\
& \quad - \langle[j-\overline{v}\rho](\theta g) * \rho, j-\overline{v}\rho\rangle + \langle[j-\overline{v}\rho](\theta g) * \rho, \overline{v}\rho\rangle =: \sum_{i=1}^{5}{B_i}.
\end{align*}
By definition of $\tau$, we have $\|\rho(t)\|\leq \frac{C_a}{4\theta\|g\|_\infty}$ for $t\leq \tau$, and therefore 
\begin{align*}
|B_1| & \leq \frac{C_a}{4}\|j-\overline{v}\rho\|\|j\| \leq \frac{C_a}{4}\|j-\overline{v}\rho\|\sqrt{\eta}, \\
B_2 + B_4  &\leq 0, \\
|B_3| & \leq C_a\|j-\overline{v}\rho\|\|\overline{v}\rho\| \leq C_a\|j-\overline{v}\rho\|\sqrt{\eta},\\
|B_5| & \leq \frac{C_a}{4}\|j-\overline{v}\rho\|\|\overline{v}\rho\| \leq \frac{C_a}{4}\|j-\overline{v}\rho\|\sqrt{\eta}.
\end{align*}
Overall, we get 
\begin{align}\label{bound_partial_t_eta}
\frac{1}{2}\partial_t\eta(t) \leq |B_1|+|B_3|+|B_5|. 
\end{align}

Integrating \eqref{bound_partial_t_eta} in time, and using the flocking property,  due the restriction $t\leq \tau$, from Point II), we get 
\begin{align*}
\eta(t) & \leq \eta(0) + 3\int_{0}^{t}{C_a\|j(s)-\overline{v}\rho(s)\|\sqrt{\eta(s)}ds} \leq \eta(0) + \int_{0}^{t}{3C_a\|j_0-\overline{v}\rho_0\|e^{-(C_a/2)s}\sqrt{\eta(s)}ds}.
\end{align*}
Using a nonlinear generalisation of Gr\"onwall's inequality \cite[Theorem 21]{Dragomir2003} we obtain
\begin{align*}
\eta(t) & \leq \left\{\sqrt{\eta(0)} + 3\int_{0}^{t}{(C_a/2)e^{-(C_a/2)s}ds} \| j_0 - \overline{v} \rho_0 \|\right\}^2 \leq 2\eta(0) + 2\cdot 3^2\|j_0 - \overline{v}\rho_0\|^2.
\end{align*}
This implies that, for $t\leq \tau$
\begin{align*}
\|\rho(t)\| \leq \left( 2\frac{\overline{v}^2\|\rho_0\|^2 + \|j_0\|^2}{\overline{v}^2} + \frac{2\cdot 3^2\|j_0 - \overline{v}\rho_0\|^2}{\overline{v}^2} \right)^{1/2} =: K,
\end{align*}
which completes the proof.

\subsection{Proof of Proposition \ref{local_result_w}}\label{Appendix:weights_flocking}

The proof of Proposition \ref{local_result_w} is, conceptually, identical to that of Proposition \ref{local_result}. The proof of Proposition \ref{local_result} only involves $L^2$ bounds for the unknown densities $\rho,j$, since all higher derivatives of said quantities vanish using integration by parts.
Here, however, due to the presence of the weight $w$, we must make sure that no computation introduces derivatives of the densities $\rho,j$ when integrating by parts: said differently, we are only allowed to offload derivatives onto the weight $w$ if we want to reproduce the proof of Proposition \ref{local_result}. 
\begin{proof}[Proof of Proposition \ref{local_result_w}]
We need to readapt Points I), II), III) of the proof of Proposition \ref{local_result}. 

\emph{Proving I)}. This is done in analogy to Point I) in the proof of Proposition \ref{local_result}, since the weight $w$ is smooth and uniformly bounded away from zero and from above.

\emph{Proving II)}. Unlike what happens in the corresponding step of Proposition \ref{local_result}, we can not directly work with the differentials of the flocking quantity $\|j-\overline{v}\rho\|$ (or equivalently $\|j-\overline{v}w\rho\|$), as to do so 
would introduce higher derivatives of $\rho$, $j$ in to the estimates. The strategy around this problem is split into two sub-steps.
\begin{itemize}
\item \emph{Step IIa)} We show a decaying bound for the quantity 
\begin{align}
R := \langle j-\overline{v}\rho, j-\overline{v}w\rho\rangle = \|j-\overline{v}\rho\|^2 + \langle j-\overline{v}\rho, (1-w)\overline{v}\rho\rangle.
\end{align} 
\item \emph{Step IIb)} We derive the estimate for our standard flocking quantity $\|j-\overline{v}\rho\|$ as a `small' correction (in the $L^2$ sense) of the estimate for $R$ proved in \emph{Step IIa}).
\end{itemize}

\emph{Proving IIa)} The time differential of $R$ is given by 
\begin{align}
\partial_t R 
& = \langle \rho (a\ast j)  - j (a \ast \rho) , j-\overline{v}w\rho \rangle + \langle \rho (a\ast j)  - j (a \ast \rho) , j-\overline{v}\rho \rangle \nonumber\\
& \quad - \langle \overline{v}\partial_t w \rho, j-\overline{v}\rho\rangle - \overline{v}\left( \langle \nabla w, j^2/2\rangle - \overline{v}^2\langle \nabla w,\rho^2/2 \rangle \right) := \sum_{i=1}^{4}{Y_i}
\end{align}
where we have used integration by parts in the final equality.
We estimate $Y_1,\dots,Y_4$ individually. First, we define the stopping time 
\begin{align}\label{stop_time_new}
\tau := \inf\left\{t>0: \sqrt{\overline{v}^2\|\rho(t)\|^2 + \|j(t)\|^2} > \overline{v} Q\right\},
\end{align}
for some fixed $Q < C_a/(8\theta\|g\|)$. Therefore, for $t\leq \tau$, we get 
\begin{align}\label{Y3Y4}
|Y_3| \leq 2\overline{v}^2 Q^2 \|\partial_t w\|_{\infty}, \qquad 
|Y_4| \leq \frac{\overline{v}^3}{2} Q^2 \|\nabla w\|_{\infty}.
\end{align}
Since $Q < C_a/{8\theta\|g\|_\infty}$, using the writing $a = C_a  + \theta g$ for the interaction function, adding and subtracting $\overline{v}\rho$, and reusing the bound \eqref{bound_eps_bit}, we get
\begin{align}\label{Y1_Y2}
Y_1 + Y_2 
& \leq -C_aR + 2\langle \rho (\theta g)\ast (j-\overline{v}\rho) - (j-\overline{v}\rho) (\theta g)\ast \rho, j-\overline{v}\rho \rangle \nonumber \\
& \quad \quad + \langle \rho (\theta g)\ast j - j (\theta g)\ast \rho, (1-w)\overline{v}\rho \rangle -C_a\|j-\overline{v}\rho\|^2   \nonumber \\
& \leq -C_aR -(C_a/2)\|j-\overline{v}\rho\|^2 + \overline{v}^2\|\theta g\|Q^3\|1-w\|_{\infty}
\end{align}
Combining \eqref{Y1_Y2} and \eqref{Y3Y4} gives
\begin{align}\label{differential_for_R}
\partial_t R \leq -C_aR + \tilde{\beta}(t),
\end{align}	
where we have abbreviated
\begin{align}\label{abbreviation_for_beta_tilde}
\tilde{\beta} := 2\overline{v}^2 Q^2 \left(\overline{v}\|\nabla w\|_{\infty} + \|\partial_t w\|_{\infty} \right) + \overline{v}^2\|\theta g\|Q^3\|1-w\|_{\infty}.
\end{align}
An application of Gr\"onwall's inequality then gives
\begin{align}\label{decaying_bound_for_R}
R(t) \leq R_0 e^{-C_at} + e^{-C_at}\int_{0}^{t}{e^{C_as} \tilde{\beta}(s) ds}.
\end{align}

\emph{Proving IIb)}.
By noticing that 
$\|j-\overline{v}\rho\|^2 \leq 2R + \overline{v}^2\|w-1\|^2\|\rho\|^2 \leq 2R + \overline{v}^2\|w-1\|^2Q^2$, we deduce from \eqref{decaying_bound_for_R} that
\begin{align}\label{decaying_bound_for_flocking}
\|j(t)-\overline{v}\rho(t)\|^2 & \leq 2R_0 e^{-C_at} + \beta(t) \leq 2|R_0| e^{-C_at} + \beta(t),
\end{align}
where we have set
\begin{align}\label{abbreviation_for_beta}
\beta(t) := 2e^{-C_at}\left(\int_{0}^{t}{e^{C_as} \tilde{\beta}(s) ds}\right) + \overline{v}^2\|w-1\|^2Q^2.
\end{align}
This concludes proving Point II).

\emph{Proving III)} Once again, in order not to introduce derivatives of $\rho$ and $j$, we work with the energy estimates by differentiating the quantity $\eta_w := \overline{v}^2\int{w\rho^2}+\|j\|^2$ rather than $\eta := \overline{v}^2\|\rho\|^2 + \|j\|^2$. Note that, in any case, $\eta_w \propto \eta $ since $w$ is an admissible weight. 

For $t\leq \tau$, we get also using integration by parts
\begin{align}\label{differential_eta_w}
\frac{1}{2}\partial_t \eta_w & = \frac{1}{2}\overline{v}^2\langle (\partial_t w) \rho ,\rho \rangle - \overline{v}^2 \langle w\rho, \nabla \cdot j\rangle + \langle \rho(a\ast j) - j(a\ast \rho),j \rangle - \overline{v}^2 \langle\nabla(w\rho), j\rangle \nonumber\\
& =  \frac{1}{2}\overline{v}^2\langle (\partial_t w) \rho ,\rho \rangle + \langle \rho(a\ast j) - j(a\ast \rho),j \rangle. 
\end{align}
By re-using the same analysis done for \eqref{differential_eta}, the fact that $\eta_w \propto \eta$ (see Definition \ref{defn_admissible_weights}), and the flocking bound \eqref{decaying_bound_for_flocking}, we obtain 
\begin{align}\label{integral_eqn_eta_w}
\eta(t) & \leq \left(\frac{w_{\max}}{w_{\min}}\eta(0) + 3\int_{0}^{t}{C_a\|j(s)-\overline{v}\rho(s)\|\frac{1}{w_{\min}}\sqrt{\eta(s)}ds} + \int_{0}^{t}{\|\partial_t w\|_\infty \frac{1}{2w_{\min}}\eta(s)ds}\right) \nonumber\\
& \leq  \left(\frac{\eta(0)}{w_{\min}} + 3\int_{0}^{t}{\frac{C_a}{w_{\min}}
\left[ 2\sqrt{|R_0|} e^{-(C_a/2)s} + \sqrt{\beta(s)}
 \right]
\sqrt{\eta(s)}ds} + \int_{0}^{t}{\frac{\|\partial_t w\|_\infty}{2w_{\min}}\eta(s)ds}\right).
\end{align}
Using Gr\"onwall's inequality as in \cite[p. 361]{mitronovic1991inequalities}, we deduce
\begin{align}\label{energy_estimate_eta_w}
\eta(t) & \leq \left( \sqrt{\frac{w_{\max}}{w_{\min}}\eta(0)}\exp\left\{\int_{0}^{\infty}{ \|\partial_t w(s)\|_{\infty}\frac{1}{4w_{\min}} ds} \right\}  \right. \nonumber\\
& \quad \left. + \frac{1}{2w_{\min}}\int_{0}^{t}{3C_a
\left[ 2\sqrt{|R_0|} e^{-(C_a/2)s} + \sqrt{\beta(s)}
 \right]}\exp\left\{ \int_{0}^{\infty}{ \|\partial_t w(s)\frac{1}{4w_{\min}}\|_{\infty}dr} \right\} \right)^2 ds \nonumber\\
 & \leq 2\frac{w_{\max}}{w_{\min}}\eta(0)\exp\left\{\int_{0}^{\infty}{ \|\partial_t w(s)\|\frac{1}{2w_{\min}} ds }\right\} \nonumber\\
 & \quad + \frac{ 2\cdot 6^2 }{w_{\min}^2} \left( \sqrt{|R_0|} + \int_{0}^{t}{\sqrt{\beta(s)}ds} \right)^2 \exp\left\{ \int_{0}^{\infty}{ \|\partial_t w(r)\|_{\infty}\frac{1}{2w_{\min}}dr} \right\}
 =: K.
\end{align}
Since $K < \overline{v}^2Q^2$ by assumption, the proof is concluded.
\end{proof}

\subsection{Proof of Lemma \ref{well_posedness_reg}}
\label{app:multi_d_flocking}



As in Lemma \ref{flocking_property_multi_d}, set $\tau := \inf\{t>0: \|\rho(\cdot,t)\| \geq C_a/(4\theta\|g\|_\infty)\}$.
The time differential of the quantity $\eta(t) := |\overline{\f{v}}|^2\|\rho(t)\|^2 + \|\f{j}(t)\|^2$ gives
\begin{align*}
\frac{1}{2}\partial_t \eta
& = |\overline{\f{v}}|^2\langle -\nabla \cdot \f{j},\rho \rangle 
+ \sum_{z=1}^{d}{\langle j_z (a\ast \rho) - \rho(a\ast j_z), j_z \rangle} - \sum_{z=1}^{d}{\langle \overline{v}_z\nabla \cdot \f{j}, j_z\rangle}  \\
& \quad - \sum_{z=1}^{d}{\sum_{c \in \mathcal{C}_{h,d}}{\varphi(\|(\rho,\f{j})\|_{\nabla,c})\int{e_c\{|\nabla j_z|^2 + |\overline{v}_z|^2|\nabla\rho|^2\}}}} =: \sum_{i=1}^4T_i. 
\end{align*}
Throughout, we use the Poincar\'e inequality on the whole domain, namely
\begin{align}\label{Poincare_whole_domain}
\|f\| \leq C_P\|\nabla f\| + C_P \left|\int{f}\right|.
\end{align}
where, in order not burden the nature of the constants in subsequent computations, we have combined all constants into just $C_P$.  
Since $\rho$ and $\{j_z\}_{z=1}^{d}$ preserve their mass 
and Cauchy-Schwartz inequality, we get the bounds
\begin{align}\label{T1}
T_1 & \leq |\overline{\f{v}}|\|\nabla\cdot \f{j}\||\overline{\f{v}}|\|\rho\| \leq \frac{1}{2}|\overline{\f{v}}| \left(\|\nabla\cdot\f{j}\|^2 + |\overline{\f{v}}|^2 \|\rho\|^2\right) \nonumber\\
& \leq \frac{d}{2}|\overline{\f{v}}| \left(|\overline{\f{v}}|^2 \|\rho\|^2 + \sum_{z=1}^{d}{\|\nabla j_z \|^2}\right) \leq  \frac{d}{2}|\overline{\f{v}}| \left(|\overline{\f{v}}|^2 \left[C_P^2\|\nabla\rho\|^2 + C_P^2\right] 
+ \sum_{z=1}^{d}{\|\nabla j_z \|^2}\right) \nonumber\\
& \leq \tilde{C}(d)(C_P^2+1)|\overline{\f{v}}|\left(|\overline{\f{v}}|^2\|\nabla\rho\|^2 + \sum_{z=1}^{d}{\|\nabla j_z \|^2}\right) + \tilde{C}(d)|\overline{\f{v}}|^3C_P^2.
\end{align} 
and, using similar considerations, we also get (possibly for a different $\tilde{C}(d)$)
\begin{align}\label{T13}
T_3 \leq \tilde{C}(d)(C_P^2+1)|\overline{\f{v}}|\left(|\overline{\f{v}}|^2\|\nabla\rho\|^2 + \sum_{z=1}^{d}{\|\nabla j_z \|^2}\right) + \tilde{C}(d)|\overline{\f{v}}|^3C_P^2.
\end{align}
The same computations as in \eqref{bound_partial_t_eta}, together with Poincar\'e and Cauchy-Schwartz inequalities, 
give us 
\begin{align}\label{T2}
T_2 \leq C_a\sum_{z=1}^{d}{\|j_z - \overline{v}_z\rho\|\sqrt{\eta}} \leq C_a\tilde{C}(d)(C^2_{P} + 1) \left(|\overline{\f{v}}|^2\|\nabla \rho\|^2 + \sum_{z=1}^{d}{\|\nabla j_z\|^2}\right) + C_a\tilde{C}(d)C^2_{P}|\overline{\f{v}}|^2 
\end{align}
Using the fact that $e_c(x)\gtrsim h^{-d}$ on cell $c$, we get
\begin{align}\label{T4}
T_4 \leq - \sum_{z=1}^{d}{\sum_{c \in \mathcal{C}_{h,d}}{\varphi(\|(\rho,\f{j})\|_{\nabla,c})h^{-d}\{\|\nabla j_z\|_c^2 + \overline{v}^2_z\|\nabla\rho\|_c^2\}}}
\end{align}
Combining \eqref{T1}--\eqref{T13}--\eqref{T2}--\eqref{T4} gives 
\begin{align}\label{partial_t_eta_multid}
\frac{1}{2}\partial_t \eta & \lesssim \tilde{C}(d)C_P^2( C_a |\overline{\f{v}}|^2 + |\overline{\f{v}}|^3)  \\
& \quad 
+ \sum_{z=1}^{d}{\sum_{c \in \mathcal{C}_{h,d}}{\left[\tilde{C}(d)(C_P^2+1)(C_a + |\overline{\f{v}}|) -  \varphi(\|(\rho,\f{j})\|_{\nabla,c})h^{-d}\right]\{\|\nabla j_z\|_c^2 + \overline{v}^2_z\|\nabla\rho\|_c^2\}}}.
\end{align}
Now, we split the cells of $\mathcal{C}_{h,d}$ into 
\begin{align*}
\mathcal{C}_{good} := \left\{ c\in\mathcal{C}_{h,d}\colon \|\rho,\f{j}\|_{\nabla,c} \leq (V+1)h^d \right\},\qquad \mathcal{C}_{bad} := \left\{ c\in\mathcal{C}_{h,d}\colon \|\rho,\f{j}\|_{\nabla,c} > (V+1)h^d \right\}.
\end{align*}
Now assume that $\|\rho\|\geq \gamma$, where $\gamma>0$ is chosen to satisfy \eqref{properties_constant_A_1}, \eqref{properties_constant_A_3}. Note that, in particular, \eqref{properties_constant_A_3} gives us 
$
\gamma  < C_a/(4\theta\|g\|_{\infty}).
$ 
Then, using Poincar\'e implies 
\begin{align*}
|\overline{v}|^2\|\nabla\rho\|^2 + \sum_{z=1}^{d}{\|\nabla j_z\|^2} \geq |\overline{v}|^2\frac{\gamma^2 - 2C^2_P}{2C^2_P}.
\end{align*}

Thus we rely on \eqref{properties_constant_A_1} and we choose $V$ in \eqref{cutoff} such that 
\begin{align}\label{choice_K}
(V+1)^2 < \left(|\overline{v}|^2\frac{\gamma^2 - 2C^2_P}{2C^2_P}.\right)^{1/2}
\end{align}
Therefore, if $\|\rho\|>\gamma$, from \eqref{partial_t_eta_multid} and \eqref{cutoff} we get
\begin{align}\label{bound_partial_eta}
\frac{1}{2}\partial_t \eta & \leq \tilde{C}(d)C_P^2(C_a|\overline{\f{v}}|^2 + |\overline{\f{v}}|^3)
+ \sum_{c\in\mathcal{C}_{good}}{\tilde{C}(d)(C_P^2+1)(C_a + |\overline{\f{v}}|)(V+1)h^d} \nonumber\\
& \quad + \sum_{c\in\mathcal{C}_{bad}}{\left[\tilde{C}(d)(C_P^2+1)(C_a + |\overline{\f{v}}|) - Wh^{-d}\right](V+1)h^d} \leq 0,
\end{align}
where the last inequality is true provided that
\begin{align}\label{lower_bound_B}
Wh^{-d} > C_1h^{-d} + C_2,
\end{align}
where we have set 
\begin{align*}
C_1 & := \tilde{C}(d)\left[(C_P^2+1)(C_a + |\overline{v}|) + (V+1)^{-1}C_P^2(C_a|\overline{v}|^2 + |\overline{v}|^3)\right], \\
C_2 & := C_a\tilde{C}(d)(C_P^2+1)(1 + |\overline{\f{v}}|).
\end{align*}
We now show that $\|\rho\|$ can not exceed the threshold $C_a/(4\theta \|g\|_{\infty})$, thus giving us the flocking property globally in time. Assume that $\tau := \inf\{t>0: \|\rho(\cdot,t)\|> C_a/(4\theta \|g\|_{\infty})\} < \infty$. Using the continuity of $\|\rho(t)\|$, call $\tau - \delta$ the last time at which $\|\rho\|$ attains value $\gamma$. In particular, $\|\rho(\tau-\delta)\|=\gamma, \|\rho(\tau)\|=C_a/(4\theta \|g\|_{\infty})$, $\|\rho(z)\|\in [\gamma,C_a/(4\theta \|g\|_{\infty})]$ for $z\in(\tau-\delta,\tau)$. 

Using \eqref{bound_partial_eta}, and adding and subtracting suitable quantities, we get 
\begin{align}
\eta(\tau) & = |\overline{\f{v}}|^2\|\rho(\tau)\|^2 + \sum_{z=1}^{d}{\| j_z(\tau)\|^2} \leq \eta(\tau-\delta)  = |\overline{\f{v}}|^2\|\rho(\tau-\delta)\|^2 + \sum_{z=1}^{d}{\| j_z(\tau-\delta)\|^2} \nonumber\\
& \leq |\overline{\f{v}}|^2 \gamma^2 + 2|\overline{\f{v}}|^2\|\rho(\tau-\delta)\|^2 + 2\sum_{z=1}^{d}{\|j_z(\tau-\delta)-\overline{v}_z\rho(\tau-\delta)\|^2}\nonumber\\
& \leq 3|\overline{\f{v}}|^2 \gamma^2 + 2\|\f{j}_0-\overline{\f{v}}\rho_0\|^2 e^{-C_a(\tau - \delta)}, \label{use_flocking}
\end{align}
where in \eqref{use_flocking} we have used the flocking property as in Lemma \ref{flocking_property_multi_d}, thanks to definition of the stopping time $\tau$. Furthermore, reordering the \eqref{use_flocking} and using assumption \eqref{properties_constant_A_3} gives
\begin{align}
\|\rho(\tau)\| \leq \left(3 \gamma^2 + \frac{2\|\f{j}_0 - \overline{\f{v}}\rho_0\|^2}{|\overline{\f{v}}|^2}\right)^{1/2} < \frac{C_a}{4\theta\|g\|_{\infty}}
\end{align} 
and this contradicts the fact, that by the definition of $\tau$, we also have $\|\rho(\tau)\| = \frac{C_a}{4\theta\|g\|_{\infty}}$. Therefore it must be $\tau = \infty$, and so the flocking property is global in time.


\end{document}